\documentclass[11pt]{amsart}

\usepackage[utf8]{inputenc}
\usepackage[T1]{fontenc}
\usepackage{amsmath,amssymb,amsthm}
\usepackage{mathtools}
\usepackage{geometry}
\usepackage[colorlinks=true,linkcolor=blue,citecolor=blue,urlcolor=blue]{hyperref}
\usepackage{enumitem}
\usepackage{cleveref}

\newtheorem{theorem}{Theorem}[section]
\newtheorem{proposition}[theorem]{Proposition}
\newtheorem{lemma}[theorem]{Lemma}

\newtheorem{example}[theorem]{Example}
\newtheorem{remark}[theorem]{Remark}

\newcommand{\R}{\mathbb{R}}
\newcommand{\C}{\mathbb{C}}
\newcommand{\Z}{\mathbb{Z}}
\newcommand{\nn}{\mathfrak{n}}
\newcommand{\frakg}{\mathfrak{g}}
\newcommand{\hh}{\mathfrak{h}}
\newcommand{\mm}{\mathfrak{m}}
\newcommand{\zz}{\mathfrak{z}}
\newcommand{\fraka}{\mathfrak{a}}
\newcommand{\so}{\mathfrak{so}}

\newcommand{\der}{\operatorname{Der}}
\newcommand{\ann}{\operatorname{Ann}}
\newcommand{\ad}{\operatorname{ad}}
\newcommand{\Ad}{\operatorname{Ad}}
\newcommand{\End}{\operatorname{End}}
\newcommand{\Aut}{\operatorname{Aut}}

\newcommand{\Span}{\operatorname{span}}
\newcommand{\ip}[2]{\langle #1, #2 \rangle}
\newcommand{\proj}[1]{[\,#1\,]_{\mm}}

\begin{document}

\title{Cyclic Riemannian nilmanifolds are naturally reductive}

\author{Hui Wang}
\address{College of Science, Nanjing University of Posts and Telecommunications, Nanjing 210003, People’s Republic of China}
\email{wanghui0801@njupt.edu.cn}

\author{Zaili Yan$^{*}$}
\address{School of Mathematics and Statistics, Ningbo University,
Ningbo, Zhejiang Province, 315211, People's Republic of China}
\email{yanzaili@nbu.edu.cn}
\thanks{Z.~Yan$^{*}$ is the corresponding author and is supported by the Zhejiang
Provincial Natural Science Foundation of China under Grant No.~LMS25A010010.}

\author{Shaoxiang Zhang}
\address{College of Mathematics and Systems Science,
Shandong University of Science and Technology, Qingdao 266590,
People's Republic of China}
\email{zhangshaoxiang93@163.com}
\thanks{S.~Zhang is partially supported by the Natural Science Foundation of Shandong Province (No. ZR2026MS0024) and by the Science and Technology Support
Plan for Youth Innovation of Colleges and Universities of Shandong Province
of China (No.~2023KJ090).}

\subjclass[2010]{Primary 53C30; Secondary 53C25, 53C24}

\keywords{Riemannian nilmanifold; cyclic metric; naturally reductive}

\begin{abstract}
We study the structure of cyclic Riemannian nilmanifolds, that is, connected
nilpotent Lie groups $N$ endowed with a left-invariant cyclic metric
$\ip{\cdot}{\cdot}$. We prove that a Riemannian nilmanifold
$(N,\ip{\cdot}{\cdot})$ is cyclic if and only if its Lie algebra $\nn$ is at
most two-step nilpotent and the associated family of skew-symmetric
endomorphisms $J_{\zz}=\{J_Z : Z\in \zz\}\subset \so(\fraka)$ is Abelian. As a
direct consequence, every cyclic Riemannian nilmanifold is naturally
reductive. We also determine the full isometry group of a connected and simply
connected cyclic Riemannian nilmanifold without Euclidean factor.
\end{abstract}

\maketitle

\setcounter{tocdepth}{1}
\tableofcontents

\section{Introduction and main results}\label{sec:intro}

The study of homogeneous Riemannian manifolds is a classical and central topic
in Riemannian geometry. Let $(M,g)$ be a connected homogeneous Riemannian
manifold and let $M=G/H$ be a homogeneous presentation, where $G$ is a
connected Lie group acting transitively and effectively on $M$ by isometries
and $H$ is the isotropy subgroup at a point $p\in M$. Let $\frakg$ and $\hh$
denote the Lie algebras of $G$ and $H$, respectively. A reductive
decomposition of $\frakg$ is a vector space direct sum $\frakg=\hh\oplus \mm$
satisfying $\Ad(H)\mm\subset \mm$.
Identifies $\mm$ as the tangent space $T_{p}M$, the metric $g$ induces an $\Ad(H)$-invariant
inner product $\ip{\cdot}{\cdot}$ on $\mm$; conversely, such an inner product on
$\mm$ determines a $G$-invariant Riemannian metric on $M$. For
$X,Y\in\frakg$, we denote by $\proj{X,Y}$ the projection of $[X,Y]$ onto $\mm$.

Following Gadea, Gonz\'alez-D\'avila and Oubi\~na
\cite{Gadea2015,Gadea2016}, a homogeneous Riemannian manifold $(M,g)$ is said to
be cyclic if it admits a homogeneous presentation $M=G/H$ and a reductive
decomposition $\frakg=\hh\oplus \mm$ such that
\begin{equation}\label{eq:cyclic}
  \ip{\proj{X,Y}}{Z}+\ip{\proj{Y,Z}}{X}+\ip{\proj{Z,X}}{Y}=0,
  \qquad \forall  X,Y,Z\in \mm .
\end{equation}
In other words, the cyclic sum of the trilinear form $(X,Y,Z)\mapsto
\ip{\proj{X,Y}}{Z}$ vanishes identically. When the isotropy is trivial, that is,
when $M=G$ is a Lie group carrying a left-invariant metric and $\hh=\{0\}$,
$\mm=\frakg$, condition \eqref{eq:cyclic} reduces to the purely algebraic condition
\begin{equation}\label{eq:cycliclie}
  \ip{[X,Y]}{Z}+\ip{[Y,Z]}{X}+\ip{[Z,X]}{Y}=0,
  \qquad \forall X,Y,Z\in \frakg,
\end{equation}
and $(G,\ip{\cdot}{\cdot})$ is called a cyclic metric Lie group \cite{Gadea2015}.
It was proved in \cite{Gadea2015} that a cyclic metric nilpotent Lie group must be Abelian.

The notion of cyclicity has its roots in the theory of homogeneous structures, initiated by Ambrose and Singer \cite{AmbroseSinger} and developed
by Tricerri and Vanhecke \cite{TricerriVanhecke}. Recall that a homogeneous
structure on a Riemannian manifold $(M,g)$ is a $(1,2)$-tensor field $S$
satisfying
\[
  \widetilde\nabla g=0,\qquad \widetilde\nabla R=0,\qquad \widetilde\nabla S=0,
\]
where $\widetilde\nabla=\nabla-S$, $\nabla$ is the Levi-Civita connection of
$(M,g)$ and $R$ is its curvature tensor. Ambrose and Singer \cite{AmbroseSinger}
proved that a connected, simply connected and complete Riemannian manifold is
homogeneous if and only if it admits a homogeneous structure. Tricerri and
Vanhecke \cite{TricerriVanhecke} decomposed the space of all possible tensors
$S$ satisfying $\widetilde\nabla g=0$ into three irreducible
$O(\dim M)$-modules
\[
  \mathcal T_1\oplus \mathcal T_2\oplus \mathcal T_3 .
\]
In this classification, a Riemannian manifold admits a homogeneous structure of
type $\mathcal T_3$ if and only if it is naturally reductive \cite{TricerriVanhecke}. Recall that a homogeneous
Riemannian manifold $(M=G/H,g)$ is naturally reductive if there exists a reductive
decomposition $\frakg=\hh\oplus \mm$ satisfying
\begin{equation}\label{eq:natred}
  \ip{\proj{X,Y}}{Z}+\ip{Y}{\proj{X,Z}}=0,
  \qquad \forall  X,Y,Z\in\mm .
\end{equation}

In \cite{Gadea2016}, Gadea, Gonz\'alez-D\'avila and Oubi\~na observed that a
homogeneous structure belongs to the class $\mathcal T_1\oplus\mathcal T_2$ if
and only if $(M,g)$ is cyclic; equivalently, the homogeneous structure
$S=\nabla-\nabla^{c}$ defined by a reductive decomposition $\frakg=\hh\oplus\mm$
is of type $\mathcal T_1\oplus\mathcal T_2$, where $\nabla^{c}$ denotes the
canonical connection associated with the decomposition. They obtained several
characterizations and properties of cyclic homogeneous Riemannian manifolds,
classified the simply connected cyclic homogeneous Riemannian manifolds of
dimension at most four, and exhibited a wide list of examples of noncompact
irreducible Riemannian $3$-symmetric spaces admitting cyclic metrics. We refer
to \cite{Bieszk,FalcitelliPastore,Gadea2015,Gadea2016,KowalskiTricerri,
PastoreVerroca,TricerriVanhecke} for further information and background.

A particularly rich and well-understood family of homogeneous Riemannian
manifolds is given by the Riemannian nilmanifolds, that is, connected
nilpotent Lie groups $N$ endowed with a left-invariant Riemannian metric. The
geometry of such manifolds has been studied intensively; we single out the
foundational work of Wilson \cite{Wilson} on their isometry groups and of Gordon
\cite{Gordon} on their natural reductivity. The aim of the present paper is to
investigate the structure of cyclic Riemannian nilmanifolds and, in
particular, to relate cyclicity to the algebraic invariants introduced by
Gordon.

Let $\nn$ denote the Lie algebra of $N$, identified with the Lie algebra of
left-invariant vector fields on $N$. We shall show in  \cref{prop:twostep} that
the cyclic condition forces $\nn$ to be at most two-step nilpotent. In the
two-step case one introduces the following standard objects. Denote by $\zz$
the center of $\nn$ and let $\fraka=\zz^{\perp}$ be the orthogonal complement of
$\zz$ in $\nn$ with respect to $\ip{\cdot}{\cdot}$. Since $\nn$ is two-step
nilpotent one has $[\nn,\nn]\subset \zz$, and one defines a linear map
\[
  J:\zz\longrightarrow \so(\fraka)=\so(\fraka,\ip{\cdot}{\cdot}|_{\fraka})
\]
by
\[
  \ip{J_Z X}{Y}=\ip{[X,Y]}{Z}, \qquad X,Y\in\fraka,\ Z\in\zz .
\]
The subspace
\[
  J_{\zz}:=\{J_Z : Z\in\zz\}\subset \so(\fraka)
\]
is the central algebraic object of the paper. Our main result is the following.

\begin{theorem}\label{thm:main}
A Riemannian nilmanifold $(N,\ip{\cdot}{\cdot})$ is cyclic if and only if its
Lie algebra $\nn$ is at most two-step nilpotent and $J_{\zz}$ is an Abelian
subalgebra of $\so(\fraka)$. Consequently, every cyclic Riemannian nilmanifold is
naturally reductive.
\end{theorem}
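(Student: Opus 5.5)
The plan is to turn cyclicity into a linear-algebra condition on $\nn$ via the isometry group, and then read off $J_{\zz}$ directly. By Wilson \cite{Wilson}, the identity component of the isometry group of a simply connected Riemannian nilmanifold is $N\rtimes K$. Here $K$ is the group of isometric automorphisms, with Lie algebra $\mathfrak k=\der(\nn)\cap\so(\nn)$. The non-simply connected case reduces to the universal cover, since cyclicity is a local condition on the presentation.

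\emph{Normal form.} First I want every cyclic presentation to be of the following form. Take a connected transitive subgroup $G\subset N\rtimes K$ with isotropy algebra $\hh\subset\mathfrak k$. Its reductive complement can be written as
\[
\mm=\{X+D_X : X\in\nn\}
\]
for a linear map $D:\nn\to\mathfrak k$, with $\mm$ identified with $\nn$ via $X+D_X\mapsto X$. Note that
\[
[X+D_X,\,Y+D_Y]=[X,Y]+D_XY-D_YX+[D_X,D_Y],
\]
so $\proj{X,Y}=[X,Y]+D_XY-D_YX$ modulo the identification. Since each $D_X$ is skew, the cyclic sum of $\ip{D_XY-D_YX}{Z}$ equals twice the cyclic sum of $\ip{D_XY}{Z}$. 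Hence \eqref{eq:cyclic} becomes
\[
\mathfrak S_{X,Y,Z}\bigl(\ip{[X,Y]}{Z}+2\ip{D_XY}{Z}\bigr)=0 .
\]
Justifying that an arbitrary cyclic presentation $G/H$ can be brought to this form is the main obstacle. The transitive group may be smaller than $N\rtimes K$ and need not contain $N$, and $\mm$ is only assumed $\Ad(H)$-invariant. I would try to use Gordon's description \cite{Gordon} of transitive subgroups of $N\rtimes K$, namely $G\subset N\rtimes K$ with $G\cdot e=N$. That description lets one write each element of $\mm$ as $X+D_X$ with $X$ running over all of $\nn$.

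\emph{Necessity.} By \cref{prop:twostep}, $\nn$ is at most two-step nilpotent, so write $\nn=\fraka\oplus\zz$. Skew derivations preserve $\zz$, hence also $\fraka=\zz^\perp$. Take $X,Y\in\fraka$ and $Z\in\zz$ in the displayed identity. The terms $\ip{D_XY}{Z}$ and $\ip{D_YZ}{X}$ vanish, and what remains is
\[
\ip{D_Z X}{Y}=-\tfrac12\ip{J_ZX}{Y},
\]
so $D_Z|_{\fraka}=-\tfrac12 J_Z$. Put $A_Z:=D_Z|_{\zz}\in\so(\zz)$.

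Pairing the derivation identity $[D_ZX,Y]+[X,D_ZY]=D_Z[X,Y]$ with $W\in\zz$ gives
\[
[J_Z,J_W]=2J_{A_ZW}.
\]
Next apply the identity to $Z,W,V\in\zz$. There the bracket terms vanish, so the cyclic sum of $\ip{A_ZW}{V}$ is zero. Since $[J_Z,J_W]$ is antisymmetric in $Z,W$, the displayed relation lets me replace $A_ZW$ by its component orthogonal to $\ker J$ (the Euclidean part) without changing $J_{A_ZW}$. That component is antisymmetric in $Z,W$ and skew in $W,V$, hence a totally antisymmetric $3$-form. Its cyclic sum is three times itself, so it vanishes. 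Therefore $J_{A_ZW}=0$ and $[J_Z,J_W]=0$, i.e.\ $J_{\zz}$ is Abelian.

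\emph{Sufficiency and the consequence.} Suppose $J_{\zz}$ is Abelian, and define $D_X=0$ for $X\in\fraka$ and $D_Z=-\tfrac12\widetilde J_Z$ for $Z\in\zz$. Here $\widetilde J_Z$ acts as $J_Z$ on $\fraka$ and as $0$ on $\zz$. It is skew, and pairing with $W$ shows
\[
\ip{[\widetilde J_ZX,Y]+[X,\widetilde J_ZY]}{W}=\ip{[J_W,J_Z]X}{Y}=0,
\]
so $\widetilde J_Z$ is a derivation precisely because $J_{\zz}$ is Abelian. Let $\mathfrak k_0=\Span\{\widetilde J_Z\}$, which is an Abelian subalgebra. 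Take $G=N\rtimes\exp\mathfrak k_0$ and $\mm=\{X+D_X\}$; then $\Ad(H)$-invariance follows from commutativity. In the reduced identity, only triples of type $(\fraka,\fraka,\zz)$ contribute, and they give $\ip{J_ZX}{Y}\,(1+2\cdot(-\tfrac12))=0$, so the presentation is cyclic. Finally, a two-step nilmanifold whose $J_{\zz}$ is an Abelian (hence Lie) subalgebra satisfies Gordon's criterion \cite{Gordon} with the trivial map $\tau=0$, so it is naturally reductive. Alternatively, one can check \eqref{eq:natred} directly for the same decomposition.
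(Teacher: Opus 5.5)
Your proposal is correct and follows the paper's proof essentially step for step. The shared steps are: the graph form $\mm=\{X+\rho(X)\}$ of the reductive complement (which the paper also takes from Gordon), the reduced identity \eqref{eq:3.3} giving $\rho(Z)|_{\fraka}=-\tfrac12 J_Z$, the vanishing of the totally antisymmetric $\tau$ through its own cyclic sum, the same $-\tfrac12 J$ decomposition for sufficiency, and Gordon's criterion with $\tau=0$ for natural reductivity. Two small remarks: the derivation identity actually gives $[J_Z,J_W]=-2J_{A_ZW}$, a sign slip that does not affect the argument; and the normal-form step you flag as the main obstacle needs nothing beyond Wilson's theorem, because transitivity makes the projection $\frakg\to\nn$ surjective with kernel $\hh$, so every complement of $\hh$ is a graph $\{X+D_X\}$ with $D$ valued in $\der(\nn)\cap\so(\nn)$, exactly as you set it up.
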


The paper is organized as follows. In \cref{sec:prelim} we collect the
necessary preliminaries on Riemannian nilmanifolds, including Wilson's
description of their isometry groups and Gordon's characterization of natural
reductivity. \Cref{sec:proof} is devoted to the proof of \cref{thm:main}; the
two implications are treated separately, the first one (\cref{prop:twostep})
being a filtration argument and the second one an explicit computation in a
suitable reductive decomposition. \Cref{sec:isom} contains the description of
the isometry group. In \cref{sec:examples} we discuss several examples,
including the Heisenberg group and the family of nilpotent Lie algebras with
diagonal $J$-map, and we comment on the low-dimensional classification and on
the contrast with Heisenberg-type groups.

\section{Preliminaries: Riemannian nilmanifolds and the $J$-map}
\label{sec:prelim}

In this section we recall, in a form suited to our purposes, some standard facts
about Riemannian nilmanifolds. Our main references are \cite{Gordon,Wilson};
see also \cite{Eberlein} for a comprehensive account of two-step nilpotent
groups.

\subsection{Data triples and Wilson's theorem}

A Riemannian nilmanifold is a connected nilpotent Lie group $N$ endowed
with a left-invariant Riemannian metric $\ip{\cdot}{\cdot}$. Such a manifold can
be specified by a data triple $(\nn,\ip{\cdot}{\cdot},L)$, where $\nn$ is
the Lie algebra of $N$, identified with the space of left-invariant vector
fields on $N$, and $L$ is a lattice, that is, a discrete vector subgroup
of the center of $\nn$. Indeed, if $\widetilde N$ denotes the simply connected
Lie group with Lie algebra $\nn$ and $\exp:\nn\to \widetilde N$ is the group
exponential map, then $\exp(L)$ is a discrete central subgroup of $\widetilde
N$; the quotient $N=\widetilde N/\exp(L)$ is a nilpotent Lie group and
$\ip{\cdot}{\cdot}$ defines a left-invariant metric on it. Conversely, every
Riemannian nilmanifold arises in this way, since a connected nilpotent Lie group
with a left-invariant metric has a lattice as above after passing to the
appropriate covering, and the compactness or noncompactness of $N$ is reflected
in the lattice $L$.

The following theorem of Wilson \cite{Wilson} describes the full isometry group
and is the starting point of the theory.

\begin{lemma}[Wilson \cite{Wilson}]\label{lem:wilson}
Let $(N,\ip{\cdot}{\cdot})$ be a Riemannian nilmanifold with data triple
$(\nn,\ip{\cdot}{\cdot},L)$, and let $G$ be its full isometry group.
\begin{enumerate}[label=(\roman*)]
\item $G$ contains a unique simply transitive nilpotent subgroup $N$; moreover,
$N$ coincides with the nilradical of $G$.
\item The Lie algebra $\frakg$ of $G$ is the vector space direct sum
$\frakg=\hh\oplus\nn$, where the isotropy algebra $\hh$ is given by
\[
  \hh=\so(\nn,\ip{\cdot}{\cdot})\cap \der(\nn)\cap \ann(L).
\]
Here $\so(\nn,\ip{\cdot}{\cdot})$ is the Lie algebra of skew-symmetric
endomorphisms of $\nn$, $\der(\nn)$ is the Lie algebra of derivations of $\nn$,
and $\ann(L)=\{\varphi\in\End(\nn): \varphi|_{L}=0\}$ is the annihilator of $L$.
\end{enumerate}
\end{lemma}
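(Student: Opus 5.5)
The plan is to split the argument into a group-theoretic part, showing that the left-translation group $N$ is normal in $G$, and an infinitesimal part computing the isotropy algebra. Once normality is known, each isotropy element acts on $N$ by an automorphism, and the description of $\hh$ in (ii) follows by differentiating.

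\emph{Normality of $N$.} Identify $N$ with its left translations; this is a connected nilpotent subgroup of $G$ acting simply transitively. The main obstacle is to show that $N$ is normal in $G$, or equivalently that $N$ is the nilradical of $G$. I would take the following route.
\begin{enumerate}[label=(\arabic*)]
\item Consider a Levi decomposition $G_0=S\ltimes R$ of the identity component, with $R$ the radical.
\item Show that the Levi factor $S$ has no noncompact simple part. Suppose a noncompact simple factor existed. Combining it with the transitive nilpotent group $N$, via Gordon's theorem on isometry groups containing transitive subgroups, one could find a transitive subgroup whose presentation yields a noncompact semisimple de Rham factor of symmetric type. A nilmanifold cannot have such a factor, because its Ricci tensor has the rigid sign pattern: negative on $[\nn,\nn]^\perp$ directions and positive on central directions.
\item Conclude that $G_0=K\cdot R$ with $K$ compact.
\item Show that the nilradical of $R$ acts transitively. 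I would compare it with $N$ inside the algebraic hull of $\Ad(G)$, using that a simply transitive nilpotent group coincides, modulo compact pieces, with the unipotent part.
\item Show that any connected nilpotent transitive normal subgroup containing $N$ must act simply transitively, by a dimension count against the compact isotropy. Hence it equals $N$, and $N$ is the nilradical.
\end{enumerate}
This argument simultaneously gives the uniqueness statement in (i): any other simply transitive nilpotent subgroup would likewise lie in the nilradical, and by dimension would equal $N$.

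\emph{Isotropy algebra.} With $N$ normal, $G=H\ltimes N$, where $H$ is the isotropy group at $e$. For $h\in H$, conjugation $c_h$ is an automorphism of $N$ fixing $e$. It agrees with $h$ itself, since $h\circ\ell_n\circ h^{-1}=\ell_{h(n)}$ gives $h(n)=c_h(n)$. Hence $h$ is an automorphism of $N$ whose differential is an isometry of $\ip{\cdot}{\cdot}$. Differentiating, $\hh$ embeds into $\so(\nn,\ip{\cdot}{\cdot})\cap\der(\nn)$.

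Moreover, each such automorphism lifts to $\widetilde N$ and preserves the central subgroup $\exp(L)$. Since $L$ is discrete and the identity component $H_0$ is connected, $H_0$ fixes $L$ pointwise, so elements of $\hh$ annihilate $L$.

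Conversely, let $D\in\so(\nn,\ip{\cdot}{\cdot})\cap\der(\nn)\cap\ann(L)$. Then $e^{tD}$ integrates to orthogonal automorphisms of $\widetilde N$ fixing $\exp(L)$ pointwise. These descend to isometries of $N$ fixing $e$, so $D\in\hh$. This proves $\hh=\so(\nn,\ip{\cdot}{\cdot})\cap\der(\nn)\cap\ann(L)$ and $\frakg=\hh\oplus\nn$.
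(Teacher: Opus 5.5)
The paper gives no proof of this lemma; it is quoted from Wilson, so your argument has to stand on its own. Your infinitesimal half is correct. Once $N$ is known to be normal, $h\circ\ell_n\circ h^{-1}=\ell_{h(n)}$ makes each $h\in H$ an isometric automorphism, and differentiating gives $\hh\subset\so(\nn,\ip{\cdot}{\cdot})\cap\der(\nn)$. Connectedness of $H_0$ against discreteness of $L$ gives $\ann(L)$, and integrating $e^{tD}$ on $\widetilde N$ and descending gives the reverse inclusion.

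The gap is the normality of $N$, which is the whole content of Wilson's theorem. Step (4) only restates the desired conclusion (``a simply transitive nilpotent group coincides, modulo compact pieces, with the unipotent part''). Step (5) starts from a nilpotent normal subgroup \emph{containing} $N$, whose existence is exactly what must be proved. Suppose you grant that the nilradical $N_1$ is transitive. It is then simply transitive, though not by a dimension count: compact subgroups of a connected nilpotent Lie group are central, so the isotropy of $N_1$ acts trivially and is trivial by effectiveness. Even so, you only obtain $G_0=H_0\ltimes N_1$, and you still need $N=N_1$. This is not formal. A priori $\nn=\{X+\phi(X):X\in\nn_1\}$ for some linear $\phi:\nn_1\to\hh$, a nontrivial modification of $\nn_1$. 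This happens in Euclidean $\R^3$: the maps $v\mapsto R_{\theta t}v+(x,y,t)$ (rotating about an axis while translating along it) form a simply transitive solvable group different from the translations. Excluding $\phi\neq 0$ is precisely where nilpotency of $N$ must be used. Both your formula for $\hh$ (stated via $\der(\nn)$ for the Lie algebra of the \emph{given} $N$) and your uniqueness claim rest on it.

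Step (2) is also not an argument as written. No specific theorem of Gordon is identified. The mechanism ``noncompact simple Levi factor $\Rightarrow$ symmetric de Rham factor'' fails for general homogeneous spaces: $\widetilde{SL}(2,\R)$ with a left-invariant metric has a noncompact simple Levi factor in its isometry group, yet is irreducible and not symmetric. So the transitive nilpotent group must enter substantively. Nor does the Ricci sign pattern of $M$ give a contradiction by itself, since a nonabelian nilmanifold already has directions of negative Ricci curvature.

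Here is a clean repair. Suppose $\frakg/\mathfrak{r}$ had a noncompact simple ideal $\mathfrak{s}_1$. Projecting $\frakg=\hh+\nn$ gives $\mathfrak{s}_1=\bar{\hh}+\bar{\nn}$, with $\bar{\hh}$ inside a maximal compact subalgebra $\mathfrak{k}_1$ and $\bar{\nn}$ nilpotent. Then the closure of the image of $N$ in the adjoint group $S_1$ is a connected nilpotent group acting transitively on the noncompact symmetric space $S_1/K_1$. By centrality of compact subgroups, it acts simply transitively. That would make $S_1/K_1$ a nilmanifold, which is impossible: its Ricci tensor is negative definite, whereas a nilmanifold is flat or has a direction of positive Ricci curvature. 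This fixes (2), but the passage from a compact Levi factor to the normality of $N$ remains unproved in your proposal.
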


Thus, at the infinitesimal level, the full isometry algebra is always a
semidirect sum $\frakg=\hh\oplus\nn$, where $\hh$ acts on $\nn$ by derivations which
are skew-symmetric with respect to the metric and which annihilate the lattice.
This provides a canonical reductive decomposition for any Riemannian
nilmanifold, and the question of natural reductivity becomes the question of
whether this canonical decomposition (or a refinement of it) satisfies
\eqref{eq:natred}.

\subsection{Two-step nilpotent Lie algebras and the $J$-map}

From now on we assume that $\nn$ is two-step nilpotent, that is,
$[\nn,[\nn,\nn]]=\{0\}$, equivalently $[\nn,\nn]\subset \zz$, where $\zz$ is the
center of $\nn$. Let
\[
  \fraka=\zz^{\perp}
\]
be the orthogonal complement of $\zz$ in $\nn$. Since $\zz$ is the center, we
have $[\fraka,\zz]=\{0\}$ and $[\fraka,\fraka]\subset \zz$. Define the linear map
\[
  J:\zz\longrightarrow \so(\fraka),\qquad Z\longmapsto J_Z,
\]
by
\begin{equation}\label{eq:Jmap}
  \ip{J_Z X}{Y}=\ip{[X,Y]}{Z}, \qquad X,Y\in\fraka,\ Z\in\zz .
\end{equation}
The fact that $J_Z\in\so(\fraka)$ is immediate from the skew-symmetry of the Lie
bracket: $\ip{J_Z X}{Y}=\ip{[X,Y]}{Z}=-\ip{[Y,X]}{Z}=-\ip{J_Z Y}{X}$. Thus $J$
completely encodes the bracket of $\nn$: for $X,Y\in\fraka$ and $Z\in\zz$,
\eqref{eq:Jmap} determines $[X,Y]$ through the nondegenerate pairing with
$\zz$.

Set
\[
  \zz_0:=[\nn,\nn]\subset \zz .
\]
Then $\nn=\fraka\oplus\zz$ and $\zz=\zz_0\oplus \ker J$, where $\ker J=\{Z\in\zz:
J_Z=0\}$. Indeed, $Z\in\ker J$ means $J_Z=0$, i.e.\ $\ip{[X,Y]}{Z}=0$ for all
$X,Y\in\fraka$, which is equivalent to $Z\perp [\nn,\nn]=\zz_0$; hence
$\ker J=\zz_0^{\perp}$ and $\zz=\zz_0\oplus\ker J$ orthogonally. Moreover, $J$
is injective on $\zz_0$. Setting $\nn_0=\fraka\oplus\zz_0$, one has $\nn=\nn_0\oplus
\ker J$ as an orthogonal direct sum of ideals, $\ker J$ being central. At the
group level this yields (see \cite[Proposition 4.6]{Gordon})
\[
  \widetilde N\cong \exp(\nn_0)\times \R^{\dim\ker J},
\]
and the direct product is also a product of Riemannian manifolds. Consequently, for many
questions one may reduce to the case in which $J$ is injective, i.e.\
$\zz=\zz_0=[\nn,\nn]$, which means precisely that $\nn$ has no Euclidean
(de Rham) factor.

We now recall two structural results of Gordon \cite{Gordon} that will be used
repeatedly. Throughout, for a two-step nilpotent Lie algebra $\nn$, we keep the notation
$\zz$, $\fraka$, $J$ introduced above.

\begin{lemma}[{Gordon \cite[Lemma 4.7]{Gordon}}]\label{lem:gordon}
Assume that $\nn$ is two-step nilpotent and that $J$ is injective. Then the
isotropy algebra $\hh$ of the full isometry group acts as follows.
\begin{enumerate}[label=(\roman*)]
\item $\hh$ leaves each of the subspaces $\fraka$ and $\zz$ invariant.
\item For every $\varphi\in\hh$,
\[
  \varphi|_{\zz}=J^{-1}\circ \ad_{\so(\fraka)}(\varphi|_{\fraka})\circ J .
\]
In particular, the restriction map $\varphi\mapsto \varphi|_{\fraka}$ is an
isomorphism of $\hh$ onto a subalgebra of $\so(\fraka)$.
\item Let $\varphi\in\so(\fraka)$. Then $\varphi$ extends to an element of $\hh$ if
and only if $[\varphi,J_{\zz}]\subset J_{\zz}$ and
\[
  J^{-1}\circ \ad_{\so(\fraka)}(\varphi)\circ J \in \so(\zz)\cap \ann(L).
\]
\end{enumerate}
\end{lemma}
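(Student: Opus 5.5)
Since $\hh=\so(\nn,\ip{\cdot}{\cdot})\cap\der(\nn)\cap\ann(L)$ by \cref{lem:wilson}, the plan is to work purely at the Lie algebra level. The key tool is one identity for a skew-symmetric derivation $\varphi$ that preserves $\fraka$ and $\zz$:
\[
J_{\varphi Z}=[\varphi|_{\fraka},J_Z],\qquad Z\in\zz .
\]
Parts (i) and (ii) follow from this identity. Part (iii) comes from reversing its derivation.

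For (i), I use that any derivation $\varphi$ preserves the center $\zz$. Indeed, if $Z\in\zz$ then $[\varphi Z,Y]=\varphi[Z,Y]-[Z,\varphi Y]=0$ for all $Y$. Since $\varphi$ is also skew-symmetric, it preserves $\zz^{\perp}=\fraka$.

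For (ii), take $X,Y\in\fraka$ and $Z\in\zz$. Using \eqref{eq:Jmap}, skew-symmetry, the derivation rule, and the invariance of $\fraka$ from (i), I compute
\[
\ip{J_{\varphi Z}X}{Y}=\ip{[X,Y]}{\varphi Z}=-\ip{\varphi[X,Y]}{Z}=-\ip{[\varphi X,Y]}{Z}-\ip{[X,\varphi Y]}{Z}.
\]
The right-hand side equals $-\ip{J_Z\varphi X}{Y}-\ip{J_ZX}{\varphi Y}=\ip{(\varphi J_Z-J_Z\varphi)X}{Y}$. This gives the identity above. Because $J$ is injective, it yields $\varphi|_{\zz}=J^{-1}\circ\ad(\varphi|_{\fraka})\circ J$, which makes sense since the identity shows $[\varphi|_{\fraka},J_{\zz}]\subset J_{\zz}$. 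Consequently $\varphi|_{\fraka}=0$ forces $\varphi|_{\zz}=0$, so the restriction map is injective. It is a Lie algebra homomorphism because $\fraka$ is invariant under all of $\hh$, so restriction commutes with commutators.

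For (iii), necessity is immediate from (ii). For sufficiency, let $\varphi\in\so(\fraka)$ satisfy $[\varphi,J_{\zz}]\subset J_{\zz}$. Then $\psi:=J^{-1}\circ\ad(\varphi)\circ J$ is a well-defined endomorphism of $\zz$, since $J:\zz\to J_{\zz}$ is bijective. Define $\tilde\varphi=\varphi\oplus\psi$ on $\nn=\fraka\oplus\zz$. Then $\tilde\varphi$ is skew-symmetric exactly when $\psi\in\so(\zz)$, since $\fraka\perp\zz$. It annihilates $L\subset\zz$ exactly when $\psi\in\ann(L)$.

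It remains to check that $\tilde\varphi$ is a derivation. Brackets involving $\zz$ vanish, and $\tilde\varphi$ preserves $\zz$, so only pairs $X,Y\in\fraka$ matter. There I run the computation of (ii) backwards. The defining relation $J_{\psi Z}=[\varphi,J_Z]$ gives $\ip{\psi^{*}[X,Y]}{Z}=\ip{[\varphi X,Y]+[X,\varphi Y]}{Z}$ for all $Z\in\zz$. Skew-symmetry of $\psi$ turns $\psi^{*}$ into $-\psi$, and this produces exactly the derivation rule $\psi[X,Y]=[\varphi X,Y]+[X,\varphi Y]$.

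The main point of care is this sign and adjoint bookkeeping in (iii). Without the hypothesis $\psi\in\so(\zz)$, the reversed computation only controls $\psi^{*}$, not $\psi$. This is why the skew-symmetry condition on $\psi$ appears in the statement.
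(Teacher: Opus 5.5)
The paper does not prove this lemma; it quotes it from Gordon. Your argument is the standard direct verification from Wilson's description $\hh=\so(\nn,\ip{\cdot}{\cdot})\cap\der(\nn)\cap\ann(L)$, and it is correct in substance. Parts (i) and (ii) are fine as written. Derivations preserve the center, and skew-symmetry then gives invariance of $\fraka=\zz^{\perp}$. Your derivation of $J_{\varphi Z}=[\varphi|_{\fraka},J_Z]$ has the right signs, and injectivity and the homomorphism property of restriction follow as you say.

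There is one sign slip in (iii), at exactly the step you flag as delicate. Running the computation backwards gives
\[
\ip{[X,Y]}{\psi Z}=\ip{J_{\psi Z}X}{Y}=\ip{[\varphi,J_Z]X}{Y}=-\ip{[X,\varphi Y]}{Z}-\ip{[\varphi X,Y]}{Z}.
\]
So the correct intermediate identity is $\ip{\psi^{*}[X,Y]}{Z}=-\ip{[\varphi X,Y]+[X,\varphi Y]}{Z}$, not the version you display without the minus sign. If you substitute $\psi^{*}=-\psi$ into your displayed version, you get $\psi[X,Y]=-([\varphi X,Y]+[X,\varphi Y])$, which is the wrong derivation rule. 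With the corrected sign you get exactly $\psi[X,Y]=[\varphi X,Y]+[X,\varphi Y]$, as you claim. With that fix the proof is complete.
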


The content of \cref{lem:gordon} is that an isotropy transformation is
completely determined by its restriction to $\fraka$, and that the admissible
restrictions are precisely those which normalize $J_{\zz}$ and whose induced
action on $\zz$ is skew-symmetric and annihilates the lattice.

In the case where $J$ is not injective, the statements above hold with the
obvious modifications, replacing $\zz$ by $\zz_0$ and leaving $\ker J$ invariant
and pointwise fixed. We shall use this version in the proof of
\cref{thm:main}, where the relevant decomposition is $\nn=\fraka\oplus\zz_0\oplus
\ker J$.

\begin{theorem}[{Gordon \cite[Theorem 4.8]{Gordon}}]\label{thm:gordon}
A Riemannian nilmanifold $(N,\ip{\cdot}{\cdot})$ with data triple
$(\nn,\ip{\cdot}{\cdot},L)$ is naturally reductive if and only if $\nn$ is at
most two-step nilpotent and both of the following conditions hold:
\begin{enumerate}[label=(\roman*)]
\item $J_{\zz}$ is a subalgebra of $\so(\fraka)$;
\item $\tau_Z\in\so(\zz_0)\cap\ann(L)$ for every $Z\in\zz_0$, where the linear
map $\tau_Z:\zz_0\to\zz_0$ is defined by
\[
  [J_Z,J_{Z'}]=J_{\tau_Z Z'}, \qquad Z,Z'\in\zz_0 .
\]
\end{enumerate}
\end{theorem}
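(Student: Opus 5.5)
Since \cref{thm:gordon} is quoted from \cite{Gordon}, I only sketch how I would reprove it. The plan is to work inside Wilson's canonical decomposition $\frakg=\hh\oplus\nn$ from \cref{lem:wilson}. First I would reduce to reductive complements of the form
\[
  \mm_D=\{X+D(X): X\in\nn\},
\]
where $D:\nn\to\hh$ is a linear map. The justification is that natural reductivity with respect to some transitive group can be transferred to the full isometry group $G$, since it is a property of a complement to the isotropy. Also, $N$ is the nilradical of $G$ and acts simply transitively, so every $\Ad(H)$-invariant complement projects isomorphically onto $\nn$. I expect this transfer step to need care, and I would take it from \cite{Gordon}.

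Next I would compute brackets in $\mm_D$. Write $D_X:=D(X)$, acting on $\nn$ as a skew derivation. Then the $\mm_D$-projection of $[X+D_X,Y+D_Y]$ corresponds to the vector $[X,Y]+D_XY-D_YX\in\nn$. Condition \eqref{eq:natred} therefore becomes the requirement that
\[
  Y\longmapsto [X,Y]-D_YX
\]
be skew-symmetric for every $X\in\nn$; the $D_XY$ term is already skew because $D_X\in\so(\nn)$. Combining this with the derivation property of each $D_X$ should produce the algebraic constraints.

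For the direction ``naturally reductive $\Rightarrow$ (i),(ii)'' I would proceed in three steps.
\begin{enumerate}[label=(\alph*)]
\item Take $X$ in the center. Then $\ad_X=0$, which forces $\ip{D_YX}{W}$ to have a special symmetry in $(Y,W)$.
\item Iterate along the lower central series, using the derivation property. The goal is to show that $[\nn,[\nn,\nn]]=0$.
\item In the two-step case, split $D$ along $\nn=\fraka\oplus\zz_0\oplus\ker J$. I expect to find $D_Z|_{\fraka}=\tfrac12 J_Z$ (up to a sign to be fixed) for $Z\in\zz_0$, and that $D$ can be taken to vanish on $\fraka$.
\end{enumerate}
Once (c) is in place, \cref{lem:gordon}(iii) applied to $\varphi=\tfrac12J_Z$ gives $[J_Z,J_\zz]\subset J_\zz$, which is condition (i). It also gives that $J^{-1}\ad(J_Z)J=\tau_Z$ lies in $\so(\zz_0)\cap\ann(L)$, which is condition (ii).

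For the converse I would set $D_X=0$ for $X\in\fraka\oplus\ker J$, and for $Z\in\zz_0$ let $D_Z$ be the extension of $\tfrac12J_Z$ supplied by \cref{lem:gordon}(iii). Hypotheses (i) and (ii) are exactly what make this extension exist in $\hh$. I would then verify \eqref{eq:natred} on the blocks $\fraka\times\fraka$, $\fraka\times\zz$ and $\zz\times\zz$ using \eqref{eq:Jmap}. On $\zz\times\zz$ the verification reduces to skew-symmetry of $\tau_Z$.

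I expect the main obstacle to be step (b), showing that natural reductivity forces at most two-step nilpotency. The natural approach is a filtration argument. If $\nn$ had step $k\ge3$, I would pair the skew-symmetry condition with the last nonzero term $C^{k}\nn$ of the lower central series. Using that the $D_X$ preserve this filtration and are skew, I would show that the brackets landing in $C^{k}\nn$ vanish, which is a contradiction.
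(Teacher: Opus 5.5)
The paper does not prove this statement; it is quoted from Gordon. Your overall route is sound and parallels what the paper does for cyclicity in \cref{prop:twostep} and \cref{lem:necc}: graph complements $\mm_D$ over Wilson's decomposition, \eqref{eq:natred} recast as skew-symmetry of $Y\mapsto[X,Y]-D_YX$, a filtration argument, and \cref{lem:gordon}(iii) to read off (i) and (ii).

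\textbf{The gap: the normalization in step (c) is wrong, and the converse fails with it.} The normalization $D_Z|_{\fraka}=\pm\tfrac12 J_Z$ is false. Insert $X,Y\in\fraka$ and $W\in\zz$ into your skew-symmetry condition. Since $[X,W]=0$ and $D_YX\in\fraka\perp W$ by \cref{lem:gordon}(i), the condition reduces to $\ip{[X,Y]}{W}=\ip{Y}{D_WX}$. By \eqref{eq:Jmap} this says $\ip{J_WX}{Y}=\ip{D_WX}{Y}$, so natural reductivity forces $D_W|_{\fraka}=J_W$ exactly, with coefficient $+1$.

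The factor $-\tfrac12$ is the cyclic normalization $\rho(Z)=-\tfrac12J_Z$ of \eqref{eq:rhoZ}. The naturally reductive complement of \cref{rem:suff} has coefficient $+1$.

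\begin{itemize}
\item In the forward direction the slip is harmless, because (i) and (ii) are unchanged when $J_Z$ is replaced by a nonzero multiple.
\item In the converse it is fatal. If $D_Z$ extends $\tfrac12J_Z$, the same block gives $\ip{J_WX}{Y}-\tfrac12\ip{J_WX}{Y}\neq0$ whenever $J_W\neq0$. So the complement you propose is not naturally reductive.
\end{itemize}

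\textbf{Repairing the converse.} Take $D_Z=(J_Z,\tau_Z,0|_{\ker J})$ for $Z\in\zz_0$ and $D=0$ on $\fraka\oplus\ker J$. This lies in $\hh$ by \cref{rem:tau}. With this choice:
\begin{itemize}
\item The terms involving $\fraka$ cancel exactly.
\item The $\zz\times\zz$ block becomes $-\ip{\tau_yz}{w}-\ip{y}{\tau_wz}$ for $y,z,w\in\zz_0$. Using $\tau_yz=-\tau_zy$ and $\tau_wz=-\tau_zw$, this equals $\ip{\tau_zy}{w}+\ip{y}{\tau_zw}=0$. So you need the antisymmetry \eqref{eq:tauantisym} as well as $\tau_z\in\so(\zz_0)$.
\end{itemize}
Your sketch also omits the check that $\mm_D$ is a reductive complement in some transitive subalgebra. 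One choice is $\hh_0=\{D_Z: Z\in\zz_0\}$. It is a subalgebra because $[D_W,D_Z]=D_{\tau_WZ}$, by the Jacobi identity and \cref{lem:gordon}(ii). That identity, together with $D_W\fraka\subset\fraka$ and $D_W|_{\ker J}=0$, gives $[\hh_0,\mm_D]\subset\mm_D$.

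\textbf{Two smaller remarks.}
\begin{itemize}
\item The justification you give for the transfer to the full isometry group does not work as stated, since the given $\mm$ need not be $\Ad(H)$-invariant. You do not need it, though. For a transitive $G_0\subset G$ with $\frakg_0=\hh_0\oplus\mm$, one has $\mm\cap\hh\subset\mm\cap\hh_0=0$ and $\dim\mm=\dim\nn$. Hence $\mm$ is automatically a graph over $\nn$ inside $\hh\oplus\nn$, and projecting along $\hh_0$ or along $\hh$ agrees on $\frakg_0$.
\item Step (b) is easier than you fear. Use the orthogonal pieces $\nn_{(i)}$ from the proof of \cref{prop:twostep}, which every $D_X$ preserves. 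Take $X\in\nn_{(0)}$, $Y\in\nn_{(s-2)}$ and $W\in\nn^{(s-1)}$ with $s\ge 3$. Both $D$-terms and $[X,W]$ vanish, so $[\nn_{(0)},\nn_{(s-2)}]=0$. Hence $\nn^{(s-1)}=[\nn,\nn^{(s-2)}]=0$, a contradiction.
\end{itemize}
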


\begin{remark}\label{rem:tau}
The map $\tau_Z$ in \cref{thm:gordon} is well defined because $J$ is injective
on $\zz_0$. Note that condition (ii) in \cref{thm:gordon}  is equivalent to the
requirement that $(J_Z,\tau_Z,0|_{\ker J})\in\hh$ for every $Z\in\zz_0$; this is
exactly the content of \cref{lem:gordon}(iii). It follows directly from
$[J_Z,J_{Z'}]=-[J_{Z'},J_Z]$ and the injectivity of $J$ on $\zz_0$ that
\begin{equation}\label{eq:tauantisym}
  \tau_Z Z'=-\tau_{Z'}Z, \qquad Z,Z'\in\zz_0 .
\end{equation}
\end{remark}


\section{Proof of the main theorem}\label{sec:proof}

This section is devoted to the proof of \cref{thm:main}. We proceed in three
steps. First, in \cref{prop:twostep}, we prove that a cyclic Riemannian
nilmanifold is necessarily at most two-step nilpotent. Second, we prove that if
$\nn$ is two-step nilpotent and $J_{\zz}$ is Abelian, then the Riemannian manifold is
cyclic (\cref{lem:suff}). Finally, we prove the converse implication
(\cref{lem:necc}). \Cref{thm:main} then follows, and the natural reductivity
statement is obtained from \cref{thm:gordon}.

\subsection{Cyclicity forces two-step nilpotency}

\begin{proposition}\label{prop:twostep}
Every cyclic Riemannian nilmanifold is at most two-step nilpotent.
\end{proposition}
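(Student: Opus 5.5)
The plan is to reduce the cyclic condition \eqref{eq:cyclic}, for an arbitrary homogeneous presentation, to an algebraic identity on $\nn$. We then use a graded orthogonal decomposition of $\nn$ adapted to the lower central series to show that $\nn^3=\nn^4$, where $\nn^1=\nn$ and $\nn^{i+1}=[\nn,\nn^i]$. Nilpotency then forces $\nn^3=0$.

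\emph{Step 1: normal form of the reductive complement.} Let $N=G/H$ be a presentation with reductive decomposition $\frakg=\hh\oplus\mm$ satisfying \eqref{eq:cyclic}, with $G$ connected. Then $G$ lies in the identity component of the full isometry group. By \cref{lem:wilson}, its Lie algebra embeds in $\hh'\oplus\nn$, where $\hh'=\so(\nn,\ip{\cdot}{\cdot})\cap\der(\nn)\cap\ann(L)$. We also have $\hh=\frakg\cap\hh'$, and $G$ acts transitively, so the projection of $\frakg$ to $\nn$ is onto. Since $\mm$ is a complement of $\hh$, $\mm$ projects isomorphically onto $\nn$. Hence
\[
\mm=\{X+D_X : X\in\nn\}
\]
for some linear map $X\mapsto D_X\in\hh'$. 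The identification $\mm\cong T_eN\cong\nn$ sends $X+D_X$ to $X$, and $\hh$ has zero $\nn$-component. Therefore $\proj{X+D_X,Y+D_Y}$ corresponds to the $\nn$-component of the bracket, namely $[X,Y]+D_XY-D_YX$. Skew-symmetry gives $\ip{D_XY}{Z}=-\ip{D_XZ}{Y}$, so the $D$-terms in the cyclic sum pair up, and \eqref{eq:cyclic} becomes
\[
\ip{[X,Y]}{Z}+\ip{[Y,Z]}{X}+\ip{[Z,X]}{Y}+2\bigl(\ip{D_XY}{Z}+\ip{D_YZ}{X}+\ip{D_ZX}{Y}\bigr)=0
\]
for all $X,Y,Z\in\nn$. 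Care is needed here in justifying the use of \cref{lem:wilson} and the inclusion $\frakg\subset\hh'\oplus\nn$ when $N$ is not simply connected, but the lemma is stated for general data triples, so this should be fine.

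\emph{Step 2: graded decomposition.} Set $V_i=\nn^i\cap(\nn^{i+1})^{\perp}$. Then $\nn=\bigoplus_i V_i$ orthogonally, and $[V_i,V_j]\subset\nn^{i+j}$. Every $D\in\hh'$ is a derivation, so it preserves each $\nn^i$. Being skew-symmetric, it also preserves each $(\nn^{i})^{\perp}$, and hence each $V_i$.

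\emph{Step 3: the key vanishing.} Take $X\in V_1$, $Y\in V_2$, $Z\in V_3$. Every $D$-term vanishes: for instance $D_XY\in V_2\perp Z$, and similarly for the other two. Two of the bracket terms also vanish. First, $[Y,Z]\in\nn^5\perp V_1$. Second, $[Z,X]\in\nn^4\perp V_2$. The identity of Step 1 therefore reduces to $\ip{[X,Y]}{Z}=0$, so $[V_1,V_2]\subset\nn^4$.

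\emph{Step 4: conclusion.} Since $\nn=V_1+\nn^2$, we have
\[
\nn^3=[\nn,\nn^2]\subset[V_1,\nn^2]+[\nn^2,\nn^2]\subset[V_1,V_2]+[V_1,\nn^3]+\nn^4\subset\nn^4 .
\]
Hence $\nn^3=\nn^4$. Nilpotency then gives $\nn^3=0$, which is the claim.

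The main obstacle is Step 1: correctly identifying the most general reductive complement for an arbitrary transitive group, and the $\mm$-projection of brackets. Once the $D$-terms are seen to respect the grading, Steps 2–4 are routine.
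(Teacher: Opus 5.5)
Your proof is correct. It shares the paper's framework: your Step 1 identity is the paper's \eqref{eq:3.3} (which the paper only derives later, in the proof of \cref{lem:necc}), your $V_i$ are the paper's $\nn_{(i-1)}$, and your Step 2 is its Fact 1.

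The difference is in how the cyclic identity is used. For every $i$, the paper applies it to triples drawn from layers $i$, $k<i$ and $l>i$. This gives \eqref{eq:lower}, then the vanishing of all cross-layer brackets \eqref{eq:crosszero}, and the paper finally kills $\nn^{(2)}$ with the Jacobi identity. You apply the identity once, to $X\in V_1$, $Y\in V_2$, $Z\in V_3$. There all three $D$-terms and two of the bracket terms vanish for degree reasons, so $[V_1,V_2]\subset\nn^4$. Hence $\nn^3=\nn^4$, and nilpotency gives $\nn^3=0$. The paper's extra conclusion, that distinct layers commute, amounts to nothing more than centrality of $[\nn,\nn]$ once two-step nilpotency is known, so your shortcut loses nothing.

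Your Step 1 is also more robust than the paper's write-up. You let $D_X$ range over the full isotropy algebra $\so(\nn)\cap\der(\nn)\cap\ann(L)$ and read off the $\mm$-projection from the $\nn$-component of the bracket. The paper instead takes $\rho$ with values in $\hh_0=\frakg_0\cap\hh$ and uses $\frakg_0=\hh_0\oplus\nn$, which presupposes $\nn\subset\frakg_0$. That can fail for a transitive group: the screw-motion group $\R^2\rtimes\R$ acts simply transitively on flat $\R^3$ without containing all translations. Both arguments afterwards use only that each $D_X$ is a skew-symmetric derivation, so your formulation is exactly what is needed.
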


\begin{proof}
Let $(\nn,\ip{\cdot}{\cdot},L)$ be a data triple for the Riemannian nilmanifold
$(N,\ip{\cdot}{\cdot})$. Suppose that $\nn$ is $s$-step nilpotent and consider
the lower central series
\[
  \nn=\nn^{(0)}\supset \nn^{(1)}\supset \nn^{(2)}\supset \cdots \supset
  \nn^{(s)}=\{0\}, \qquad \nn^{(i+1)}=[\nn,\nn^{(i)}].
\]
Let $(N,\ip{\cdot}{\cdot})$ be cyclic with respect to a subgroup $G_0$ of the
full isometry group $G$ and a reductive decomposition $\frakg_0=\hh_0\oplus\mm$,
where $\hh_0=\frakg_0\cap\hh$. As in \cite[Theorem 4.3]{Gordon}, there exists a
linear map
\[
  \rho:\nn\longrightarrow \hh_0
\]
such that
\[
  \mm=\{X+\rho(X): X\in\nn\}.
\]
Define $\lambda:\nn\to\mm$ by $\lambda(X)=X+\rho(X)$. Since $\mm$ is a
complement to $\hh_0$ and $\frakg_0=\hh_0\oplus\nn$ at the vector-space level, $\lambda$
is a linear isomorphism; moreover, relative to the induced inner product
$\ip{\cdot}{\cdot}_{\mm}$ on $\mm$, $\lambda$ is an isometry, that is,
\begin{equation}\label{eq:isom}
  \ip{\lambda(X)}{\lambda(Y)}_{\mm}=\ip{X}{Y}, \qquad X,Y\in\nn .
\end{equation}

For each $i\in\{0,1,\dots,s-1\}$, let $\nn_{(i)}$ denote the orthogonal
complement of $\nn^{(i+1)}$ in $\nn^{(i)}$, so that
\[
  \nn^{(i)}=\nn_{(i)}\oplus \nn^{(i+1)}
\]
and
\[
  \nn=\bigoplus_{i=0}^{s-1}\nn_{(i)}
\]
is an orthogonal decomposition. Correspondingly, set
$\mm_{(i)}=\lambda(\nn_{(i)})$, so that $\mm=\bigoplus_{i=0}^{s-1}\mm_{(i)}$.

We shall need three elementary facts.

\emph{Fact 1.} For every $i$, one has $[\hh_0,\nn_{(i)}]\subset\nn_{(i)}$.
Indeed, $\hh_0\subset\hh\subset\der(\nn)$, and every derivation preserves the
lower central series: $\varphi(\nn^{(i+1)})=\varphi([\nn,\nn^{(i)}])\subset
[\varphi(\nn),\nn^{(i)}]+[\nn,\varphi(\nn^{(i)})]\subset[\nn,\nn^{(i)}]=\nn^{(i+1)}$
by induction on $i$. Since $\hh_0\subset\so(\nn)$ consists of skew-symmetric
endomorphisms, it also preserves orthogonal complements, whence
$[\hh_0,\nn_{(i)}]\subset\nn_{(i)}$.

\emph{Fact 2.} For $X\in\nn_{(i)}$ and $Y\in\nn_{(j)}$, one has
$[X,Y]\in\nn^{(i+j+1)}$. This is the standard inclusion
$[\nn^{(i)},\nn^{(j)}]\subset\nn^{(i+j+1)}$ for the lower central series.

\emph{Fact 3.} For $X\in\nn_{(i)}$ and $Y\in\nn_{(j)}$, the projection onto
$\mm$ of $[\lambda(X),\lambda(Y)]$ satisfies
\begin{equation}\label{eq:proj}
  \proj{\lambda(X),\lambda(Y)}=
  \lambda\bigl([X,Y]+[\rho(X),Y]-[\rho(Y),X]\bigr)
\end{equation}
and therefore
\begin{equation}\label{eq:3.2}
  \proj{\lambda(X),\lambda(Y)}\ \in\
  \Bigl(\bigoplus_{k\ge i+j+1}\mm_{(k)}\Bigr)\oplus \mm_{(j)}\oplus \mm_{(i)} .
\end{equation}
Indeed, since $\rho(X),\rho(Y)\in\hh_0$ and $\hh_0$ is a subalgebra,
\begin{align*}
  [\lambda(X),\lambda(Y)]
  &=[X+\rho(X),\,Y+\rho(Y)]\\
  &=[X,Y]+[\rho(X),Y]-[\rho(Y),X]+[\rho(X),\rho(Y)],
\end{align*}
where we used $[X,\rho(Y)]=-[\rho(Y),X]$. The last term lies in $\hh_0$, hence
projects to $0$ in $\mm$, while $[X,Y]\in\nn^{(i+j+1)}$ by Fact 2,
$[\rho(X),Y]\in\nn_{(j)}$ and $[\rho(Y),X]\in\nn_{(i)}$ by Fact 1. Applying the
isometry $\lambda$ yields \eqref{eq:3.2}.

We now exploit the cyclic condition \eqref{eq:cyclic}. Fix $X\in\nn_{(i)}$ and
let $B\in\bigoplus_{k<i}\mm_{(k)}$ and $Y\in\nn_{(l)}$ with $l\ge i+1$. By
\eqref{eq:cyclic} applied to the triple $(\lambda(X),B,\lambda(Y))$,
\[
  \ip{\proj{\lambda(X),B}}{\lambda(Y)}_{\mm}
  =-\ip{\proj{B,\lambda(Y)}}{\lambda(X)}_{\mm}
   -\ip{\proj{\lambda(Y),\lambda(X)}}{B}_{\mm} .
\]
We claim that both terms on the right-hand side vanish. Writing $B=\lambda(U)$
with $U\in\nn_{(k)}$ and $k<i$, \eqref{eq:3.2} gives
\[
  \proj{B,\lambda(Y)}=\proj{\lambda(U),\lambda(Y)}\in
  \Bigl(\bigoplus_{r\ge k+l+1}\mm_{(r)}\Bigr)\oplus\mm_{(l)}\oplus\mm_{(k)} .
\]
Since $k<i$ and $l\ge i+1$, all indices occurring on the right are different
from $i$; hence this element is orthogonal to $\lambda(X)\in\mm_{(i)}$, and the
first term vanishes. Likewise, by \eqref{eq:3.2} with the roles of $X$ and $Y$
interchanged,
\[
  \proj{\lambda(Y),\lambda(X)}\in
  \Bigl(\bigoplus_{r\ge l+i+1}\mm_{(r)}\Bigr)\oplus\mm_{(i)}\oplus\mm_{(l)} ,
\]
all of whose components have index at least $i$; in particular it is orthogonal
to $B\in\bigoplus_{k<i}\mm_{(k)}$, so the second term vanishes. Therefore
\[
  \ip{\proj{\lambda(X),B}}{\lambda(Y)}_{\mm}=0 .
\]
As $\lambda(Y)$ ranges over $\bigoplus_{l\ge i+1}\mm_{(l)}$ and the inner product
is nondegenerate, we deduce that
\begin{equation}\label{eq:lower}
  \Bigl[\lambda(X),\,\bigoplus_{k<i}\mm_{(k)}\Bigr]_{\mm}\subset
  \bigoplus_{k\le i}\mm_{(k)} .
\end{equation}

Now take $X\in\nn_{(i)}$ and $Y\in\nn_{(j)}$ with $j<i$. On the one hand,
\eqref{eq:lower} gives
$\proj{\lambda(X),\lambda(Y)}\in\bigoplus_{k\le i}\mm_{(k)}$. On the other
hand, by \eqref{eq:3.2},
\[
  \proj{\lambda(X),\lambda(Y)}=
  \lambda([X,Y])+\lambda([\rho(X),Y])-\lambda([\rho(Y),X])
\]
with $\lambda([X,Y])\in\bigoplus_{k\ge i+j+1}\mm_{(k)}\subset
\bigoplus_{k>i}\mm_{(k)}$, $\lambda([\rho(X),Y])\in\mm_{(j)}$ and
$\lambda([\rho(Y),X])\in\mm_{(i)}$. Since the decomposition $\mm=\bigoplus
\mm_{(k)}$ is direct, the component $\lambda([X,Y])$, which lies in
$\bigoplus_{k>i}\mm_{(k)}$, must vanish. As $\lambda$ is injective, we obtain
$[X,Y]=0$. Interchanging the roles of $i$ and $j$, we conclude that
\begin{equation}\label{eq:crosszero}
  [\nn_{(i)},\nn_{(j)}]=\{0\}, \qquad i\neq j .
\end{equation}

Consequently,
\[
  \nn^{(1)}=[\nn,\nn]=\sum_{i,j}[\nn_{(i)},\nn_{(j)}]
  =\sum_{i}[\nn_{(i)},\nn_{(i)}] .
\]
Finally, using the Jacobi identity and \eqref{eq:crosszero},
\[
  \nn^{(2)}=[\nn,\nn^{(1)}]
  =\sum_{i}\bigl[\nn,[\nn_{(i)},\nn_{(i)}]\bigr]
  \subset \sum_{i}\Bigl(\bigl[[\nn,\nn_{(i)}],\nn_{(i)}\bigr]
  +\bigl[\nn_{(i)},[\nn,\nn_{(i)}]\bigr]\Bigr) .
\]
Now $[\nn,\nn_{(i)}]\subset\nn^{(i+1)}=\bigoplus_{k\ge i+1}\nn_{(k)}$, and by
\eqref{eq:crosszero} we have $[\nn_{(k)},\nn_{(i)}]=\{0\}$ for all $k\ge i+1$
(since $k\neq i$). Hence both terms vanish, and $\nn^{(2)}=\{0\}$. This proves
that $\nn$ is at most two-step nilpotent.
\end{proof}

\subsection{Sufficiency: Abelian $J_{\zz}$ implies cyclic}

\begin{lemma}\label{lem:suff}
Let $(N,\ip{\cdot}{\cdot})$ be a Riemannian nilmanifold whose Lie algebra
$\nn$ is two-step nilpotent. If $J_{\zz}\subset\so(\fraka)$ is Abelian, then
$(N,\ip{\cdot}{\cdot})$ is cyclic.
\end{lemma}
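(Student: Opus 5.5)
The plan is to build an explicit homogeneous presentation $G_0=H_0\ltimes N$ with a modified reductive complement, and to adjust one free parameter so that the cyclic sum \eqref{eq:cyclic} vanishes. For $Z\in\zz$ let $\varphi_Z\in\End(\nn)$ be $J_Z$ on $\fraka$ and $0$ on $\zz$.

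\textbf{Step 1: $\varphi_Z$ lies in the isotropy algebra $\hh$ of \cref{lem:wilson}.} Skew-symmetry is clear, since $J_Z\in\so(\fraka)$. The map $\varphi_Z$ annihilates $L\subset\zz$ because it vanishes on $\zz$. For the derivation property we need $[J_ZX,Y]+[X,J_ZY]=0$ for $X,Y\in\fraka$, because $\varphi_Z[X,Y]=0$; brackets involving $\zz$ are trivial. Pairing with $W\in\zz$ and using \eqref{eq:Jmap} gives
\[
  \ip{[J_ZX,Y]}{W}+\ip{[X,J_ZY]}{W}=\ip{J_WJ_ZX}{Y}-\ip{J_ZJ_WX}{Y}=\ip{[J_W,J_Z]X}{Y},
\]
which vanishes exactly because $J_{\zz}$ is Abelian. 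This is where the hypothesis enters, in agreement with \cref{lem:gordon}(iii) with $\tau_Z=0$. The maps $\varphi_Z$ commute pairwise, so $\hh_0:=\{\varphi_Z:Z\in\zz\}$ is an Abelian subalgebra of $\hh$. Hence $\frakg_0=\hh_0\oplus\nn$ is the Lie algebra of a subgroup $G_0$ of the isometry group acting transitively, with isotropy algebra $\hh_0$.

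\textbf{Step 2: choice of complement.} For a constant $c\in\R$, define $\rho(X+Z)=c\,\varphi_Z$ for $X\in\fraka$, $Z\in\zz$, and set $\mm=\{U+\rho(U):U\in\nn\}$ with $\lambda(U)=U+\rho(U)$. The space $\mm$ is $\Ad(H_0)$-invariant: $\hh_0$ is Abelian and $[\varphi_{Z'},\lambda(U)]=\varphi_{Z'}X\in\fraka$, and $\rho$ vanishes on $\fraka$, so $\varphi_{Z'}X=\lambda(\varphi_{Z'}X)\in\mm$. If $H_0$ is disconnected, I would take the connected subgroup. We transport the metric by $\lambda$, as in \eqref{eq:isom}.

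\textbf{Step 3: computing the cyclic sum.} For $U_k=X_k+Z_k$, the computation in \eqref{eq:proj}, together with $[\rho U_1,\rho U_2]=0$, gives $\proj{\lambda U_1,\lambda U_2}=\lambda(W)$ with
\[
  W=[X_1,X_2]+cJ_{Z_1}X_2-cJ_{Z_2}X_1 .
\]
Therefore
\[
  \ip{\proj{\lambda U_1,\lambda U_2}}{\lambda U_3}=\ip{J_{Z_3}X_1}{X_2}+c\ip{J_{Z_1}X_2}{X_3}+c\ip{J_{Z_2}X_3}{X_1},
\]
where skew-symmetry was used in the last term. Each of the three terms, summed cyclically, equals $S:=\sum_{\mathrm{cyc}}\ip{J_{Z_1}X_2}{X_3}$. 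The total cyclic sum is thus $(1+2c)S$, and choosing $c=-\tfrac12$ makes \eqref{eq:cyclic} hold.

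The main point needing care is Step 1, namely the verification that $\varphi_Z$ is a derivation, where Abelianness is indispensable. A secondary point is checking that $[\rho U_1,\rho U_2]=0$, so that no $\hh_0$-component spoils the projection; this again follows from commutativity.
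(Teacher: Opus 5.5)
Your proposal is correct and takes essentially the same route as the paper: the same Abelian isotropy algebra $\hh_0=\{(J_Z,0|_{\zz}):Z\in\zz\}$, the same complement $\mm=\lambda(\nn)$ with $\rho=-\tfrac12 J$, and the same cyclic cancellation $1-\tfrac12-\tfrac12=0$. The differences are minor. You verify directly that $\varphi_Z$ is a skew-symmetric derivation annihilating $L$, whereas the paper cites \cref{lem:gordon}(iii) in its ``modified'' non-injective form. You also keep a free parameter $c$, which shows that $c=-\tfrac12$ is the forced choice.
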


\begin{proof}
We exhibit explicitly a reductive decomposition making the cyclic condition
hold. Let $\frakg=\hh\oplus\nn$ be the full isometry algebra given by
\cref{lem:wilson}, and define
\[
  \hh_0:=\{(J_Z,\,0|_{\zz}): Z\in\zz\}.
\]
We first check that $\hh_0\subset\hh$. For $Z\in\zz$ and $Z'\in\zz_0$ one has,
by the Abelian hypothesis,
\[
  [J_Z,J_{Z'}]=0=J_{0}\, ,
\]
so that the map $\tau_Z$ of \cref{thm:gordon} vanishes on $\zz_0$; moreover
$\tau_Z=0\in\so(\zz_0)\cap\ann(L)$. By \cref{lem:gordon}(iii),
$(J_Z,0|_{\zz})\in\hh$, as required. Note also that $\hh_0$ is an Abelian
subalgebra of $\hh$, since $[J_Z,J_{Z'}]=0$ for all $Z,Z'\in\zz$.

Define a linear map $\rho:\nn\to\hh_0$ by
\[
  \rho(a+Z)=-\tfrac12 (J_Z,\,0|_{\zz}), \qquad a\in\fraka,\ Z\in\zz,
\]
and set $\lambda(X)=X+\rho(X)$, $\mm=\lambda(\nn)$. Then
\[
  \mm=\Bigl\{a+Z-\tfrac12(J_Z,0|_{\zz}) : a\in\fraka,\ Z\in\zz\Bigr\},
\]
and $\frakg_0:=\hh_0\oplus\mm$ is a reductive decomposition of the Lie algebra
$\frakg_0$ of a subgroup $G_0$ of the isometry group. Indeed, $\hh_0\cap\mm=\{0\}$
and, since $\hh_0$ is Abelian and $\rho(\nn)=\hh_0$, we have
$[\hh_0,\hh_0]\subset\hh_0$ and $[\hh_0,\mm]\subset\mm$; a direct computation
using \eqref{eq:Jmap} also shows $[\mm,\mm]\subset\frakg_0$, as is required for a
reductive decomposition.

Let us verify the cyclic condition. Take
\[
  X=a+x-\tfrac12 J_x,\quad
  Y=b+y-\tfrac12 J_y,\quad
  Z=c+z-\tfrac12 J_z ,
\]
with $a,b,c\in\fraka$ and $x,y,z\in\zz$, where we write $J_x$ for the element
$(J_x,0|_{\zz})\in\hh_0$. Since $[J_x,J_y]=0$, $[J_x,b]=J_x b\in\fraka$,
$[J_x,y]=0$ (because $\hh_0$ acts trivially on $\zz$) and $[x,b]=[a,y]=[x,y]=0$
(because $\zz$ is central), we obtain
\begin{equation}\label{eq:bracketXY}
  [X,Y]=[a,b]-\tfrac12 J_x b+\tfrac12 J_y a \ \in\ \nn .
\end{equation}
Projecting onto $\mm$ (equivalently, applying $\lambda$ to the $\nn$-component)
and using the isometry property \eqref{eq:isom}, we get
\begin{align}
  \ip{\proj{X,Y}}{Z}_{\mm}
  &=\Bigl\langle [a,b]-\tfrac12 J_x b+\tfrac12 J_y a,\ c+z\Bigr\rangle
  \nonumber\\
  &=\ip{[a,b]}{z}-\tfrac12\ip{J_x b}{c}+\tfrac12\ip{J_y a}{c}
  \nonumber\\
  &=\ip{[a,b]}{z}-\tfrac12\ip{[b,c]}{x}-\tfrac12\ip{[c,a]}{y},
  \label{eq:cyccomp}
\end{align}
where in the last step we used \eqref{eq:Jmap}, namely
$\ip{J_x b}{c}=\ip{[b,c]}{x}$ and $\ip{J_y a}{c}=\ip{[a,c]}{y}=-\ip{[c,a]}{y}$.

Now cyclically permuting $(X,Y,Z)$, i.e.\ permuting
$(a,x)\mapsto(b,y)\mapsto(c,z)\mapsto(a,x)$, we obtain
\[
  \ip{\proj{Y,Z}}{X}_{\mm}
  =\ip{[b,c]}{x}-\tfrac12\ip{[c,a]}{y}-\tfrac12\ip{[a,b]}{z}
\]
and
\[
  \ip{\proj{Z,X}}{Y}_{\mm}
  =\ip{[c,a]}{y}-\tfrac12\ip{[a,b]}{z}-\tfrac12\ip{[b,c]}{x}.
\]
Adding the three identities, the coefficient of each of the three terms
$\ip{[a,b]}{z}$, $\ip{[b,c]}{x}$ and $\ip{[c,a]}{y}$ is
$1-\tfrac12-\tfrac12=0$. Hence
\[
  \ip{\proj{X,Y}}{Z}_{\mm}
  +\ip{\proj{Y,Z}}{X}_{\mm}
  +\ip{\proj{Z,X}}{Y}_{\mm}=0 ,
\]
which is precisely \eqref{eq:cyclic}. Thus $(N,\ip{\cdot}{\cdot})$ is cyclic.
\end{proof}

\begin{remark}[Gordon {\cite[Theorem 4.8]{Gordon}}]\label{rem:suff}
The reductive decomposition constructed in the proof of \cref{lem:suff} is
natural: it is built out of the single geometric datum $J$, and $\rho$ is
essentially $-\tfrac12 J$.
Note also that the metric is naturally reductive with respect to the decomposition
$\frakg_0:=\hh_0\oplus\mm'$, where
\[
  \mm'=\Bigl\{a+Z+(J_Z,0|_{\zz}) : a\in\fraka,\ Z\in\zz\Bigr\}.
\]

\end{remark}

\subsection{Necessity: cyclic implies Abelian $J_{\zz}$}

\begin{lemma}\label{lem:necc}
Let $(N,\ip{\cdot}{\cdot})$ be a cyclic Riemannian nilmanifold. Then $\nn$ is
at most two-step nilpotent and $J_{\zz}$ is Abelian.
\end{lemma}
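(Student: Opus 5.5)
The two-step part is \cref{prop:twostep}, so assume $\nn$ is two-step nilpotent. My plan is to show, in order, that the cyclic condition forces $\rho$ to be essentially $-\tfrac12 J$ on $\zz$, and that the commutator $[J_Z,J_{Z'}]$ then appears as the bracket of two elements of $\hh_0$. Commutativity should come from an ``orthogonality'' argument pairing $\hh_0$ with $\mm$.

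Let the cyclic presentation be $\frakg_0=\hh_0\oplus\mm$ with $\hh_0\subset\hh$, and write $\mm=\lambda(\nn)$, $\lambda=\mathrm{id}+\rho$, as in the proof of \cref{prop:twostep}. By \cref{lem:gordon} and its non-injective version, every $\varphi\in\hh_0$ preserves $\fraka$, $\zz_0$ and $\ker J$, is determined by $\varphi|_{\fraka}$, and satisfies $J_{\varphi Z}=[\varphi|_{\fraka},J_Z]$ for $Z\in\zz_0$. For $X=a+x$, $Y=b+y$, formula \eqref{eq:proj} gives
\[
\proj{\lambda(X),\lambda(Y)}=\lambda\bigl([a,b]+\rho(X)b+\rho(X)y-\rho(Y)a-\rho(Y)x\bigr).
\]

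\emph{Step 1: testing cyclicity on $(a,b,z)$-triples.} I take $X=a$, $Y=b\in\fraka$ and $Z=z\in\zz$, and expand \eqref{eq:cyclic} using the isometry \eqref{eq:isom} and the invariance of $\fraka,\zz$. I expect this to yield
\[
\ip{[a,b]}{z}+\ip{\rho(b)z}{a}\cdot 0+\ip{\rho(z)b}{a}-\ip{\rho(b)a}{z}+\cdots=0.
\]
Terms such as $\rho(a)b$ land in $\fraka$ and pair with $z$ to give $0$. The surviving terms should reduce to
\[
\ip{J_z a}{b}+2\ip{\rho(z)|_{\fraka}a}{b}=0 \quad\text{for all }a,b,
\]
that is, $\rho(z)|_{\fraka}=-\tfrac12 J_z$. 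In particular $J_z$ is the $\fraka$-part of $-2\rho(z)\in\hh_0$, so each $J_z$ extends to an element of $\hh_0\subset\hh$. I also expect the triples $(a,b,c)$ with all entries in $\fraka$ to give $\rho(a)b+\rho(b)a$-type relations, possibly $\rho|_{\fraka}$ cyclic-skew. That would be harmless, but I should check that it does not interfere.

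\emph{Step 2: testing with $(z,z',\cdot)$ and closure of $\hh_0$.} Since $-2\rho(z)\in\hh_0$ restricts to $J_z$, \cref{lem:gordon}(ii) gives $-2\rho(z)|_{\zz_0}=\tau_z$. Now I use the triple $(a,z,z')$. Here $[\lambda(z),\lambda(z')]_\mm=\lambda(\rho(z)z'-\rho(z')z)=-\tfrac12\lambda(\tau_z z'-\tau_{z'}z)=-\lambda(\tau_z z')$ by \eqref{eq:tauantisym}, and it lies in $\zz$, so it pairs trivially with $a$. The other two terms are $\ip{[\lambda(z'),\lambda(a)]_\mm}{\lambda(z)}$, which involves $\rho(a)z'$, and a similar term. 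This suggests using instead the triple $(z,z',z'')$ with all entries in $\zz_0$. There cyclicity gives
\[
\ip{\tau_z z'}{z''}+\ip{\tau_{z'}z''}{z}+\ip{\tau_{z''}z}{z'}=0.
\]
Combined with the skew-symmetry of $\tau_z$ (membership in $\hh$ forces $\tau_z\in\so(\zz_0)$) and \eqref{eq:tauantisym}, the trilinear form $T(z,z',z'')=\ip{\tau_z z'}{z''}$ is alternating. All three cyclic terms are then equal, hence $3T=0$, so $\tau\equiv 0$ and $[J_z,J_{z'}]=J_{\tau_z z'}=0$. For $z$ or $z'$ in $\ker J$ the commutator is trivially zero, so $J_{\zz}$ is Abelian.

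The main obstacle is Step 1. I have to bookkeep carefully the contributions of $\rho(a)$, $\rho(b)$ acting on $\zz$ and of the $\ker J$-components, to confirm that the $(a,b,z)$ cyclic identity isolates exactly $\rho(z)|_{\fraka}=-\tfrac12 J_z$. The terms $\ip{\rho(a)z}{b}$ vanish because $\rho(a)z\in\zz$, and $\ip{\rho(b)a}{z}$ vanishes because $\rho(b)a\in\fraka$. So only $[a,b]$ and the $\rho(z)$-terms remain, giving $\ip{[a,b]}{z}-\ip{\rho(z)a}{b}+\ip{\rho(z)b}{a}=0$. This is the desired relation, since $\rho(z)|_{\fraka}$ is skew. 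Once that is settled, Step 2 is a short symmetry argument.
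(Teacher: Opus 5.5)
Your proposal is correct and follows the paper's proof essentially step for step. The $(a,b,z)$ triples give $\rho(z)|_{\fraka}=-\tfrac12 J_z$; Gordon's formula then gives $\rho(z)|_{\zz_0}=-\tfrac12\tau_z$; and the $(z,z',z'')$ triples, via the alternating-form argument, force $\tau\equiv 0$.

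One small slip: the identity in your last paragraph should read $\ip{[a,b]}{z}+\ip{\rho(z)a}{b}-\ip{\rho(z)b}{a}=0$. Your signs on the $\rho(z)$-terms are reversed, and as written they would give $+\tfrac12 J_z$. Nothing downstream is affected, since the argument only needs $\rho(z)|_{\fraka}$ to be a nonzero multiple of $J_z$.
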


\begin{proof}
By \cref{prop:twostep}, $\nn$ is at most two-step nilpotent, so the $J$-map is
defined. Let $\frakg_0=\hh_0\oplus\mm$ be a reductive decomposition with respect to
which $(N,\ip{\cdot}{\cdot})$ is cyclic, and let $\rho:\nn\to\hh_0$ be a linear
map with $\mm=\{\lambda(X): X\in\nn\}$, where $\lambda(X)=X+\rho(X)$ and
$\lambda$ is an isometry as in \eqref{eq:isom}.

We first derive an identity that will be used throughout. For
$X,Y,Z\in\nn$, expanding as in Fact 3 of the proof of \cref{prop:twostep},
\begin{align*}
  \ip{\proj{\lambda(X),\lambda(Y)}}{\lambda(Z)}_{\mm}
  &=\Bigl\langle [X,Y]+[\rho(X),Y]-[\rho(Y),X],\, Z\Bigr\rangle\\
  &=\ip{[X,Y]}{Z}+\ip{[\rho(X),Y]}{Z}-\ip{[\rho(Y),X]}{Z} .
\end{align*}
Since $\rho(Y)\in\hh_0\subset\so(\nn)$ is skew-symmetric,
$\ip{[\rho(Y),X]}{Z}=\ip{\rho(Y)X}{Z}=-\ip{X}{\rho(Y)Z}
=-\ip{[\rho(Y),Z]}{X}$. Therefore
\[
  \ip{\proj{\lambda(X),\lambda(Y)}}{\lambda(Z)}_{\mm}
  =\ip{[X,Y]}{Z}+\ip{[\rho(X),Y]}{Z}+\ip{[\rho(Y),Z]}{X}.
\]
Summing over the cyclic permutations of $(X,Y,Z)$ and using the cyclic condition
\eqref{eq:cyclic}, we obtain
\begin{equation}\label{eq:3.3}
  0=\sum_{\mathrm{cyc}}\Bigl(\ip{[X,Y]}{Z}+2\,\ip{[\rho(X),Y]}{Z}\Bigr),
\end{equation}
where $\sum_{\mathrm{cyc}}$ denotes the sum over the three cyclic permutations of
$(X,Y,Z)$.

\emph{Step 1: the action of $\rho$ on $\fraka$.} Take $X,Y\in\fraka$ and $Z\in\zz$.
Since $\zz$ is central, $[Y,Z]=[Z,X]=0$ and $[X,Y]\in\zz$. Moreover
$[\rho(X),Y]\in\fraka$ and $[\rho(Y),Z]\in\zz$, because $\rho(X),\rho(Y)\in\hh$
preserve both $\fraka$ and $\zz$ (see \cref{lem:gordon}(i)). Hence
$\ip{[\rho(X),Y]}{Z}=0$ and $\ip{[\rho(Y),Z]}{X}=0$, while
$\ip{[Y,Z]}{X}=\ip{[Z,X]}{Y}=0$. Equation \eqref{eq:3.3} thus reduces to
\[
  \ip{[X,Y]}{Z}+2\ip{[\rho(Z),X]}{Y}=0 .
\]
Using \eqref{eq:Jmap}, $\ip{[X,Y]}{Z}=\ip{J_Z X}{Y}$, so that
\[
  \ip{J_Z X}{Y}+2\ip{\rho(Z)X}{Y}=0
\]
for all $X,Y\in\fraka$ and $Z\in\zz$. By nondegeneracy of the inner product on
$\fraka$,
\begin{equation}\label{eq:rhoZ}
  \rho(Z)X=-\tfrac12 J_Z X, \qquad X\in\fraka,\ Z\in\zz .
\end{equation}

\emph{Step 2: the action of $\rho$ on $\zz_0$.} Let $Z\in\zz_0$. By
\cref{lem:gordon}(ii) and \eqref{eq:rhoZ}, the restriction of $\rho(Z)$ to
$\zz_0$ is determined by $\rho(Z)|_{\fraka}=-\tfrac12 J_Z$; precisely, there is a
linear map $\tau_Z:\zz_0\to\zz_0$ with
\[
  [J_Z,J_{Z'}]=J_{\tau_Z Z'}, \qquad Z'\in\zz_0 ,
\]
such that
\[
  \rho(Z)=-\tfrac12\,(J_Z,\tau_Z,0|_{\ker J})\in\hh .
\]
Here $\tau_Z\in\so(\zz_0)$, because $\rho(Z)\in\hh\subset\so(\nn)$ is
skew-symmetric on $\zz$.

\emph{Step 3: vanishing of $\tau$.} Take $X,Y,Z\in\zz_0$. Since $\zz_0\subset\zz$
is central, $[X,Y]=[Y,Z]=[Z,X]=0$, and \eqref{eq:3.3} gives
\[
  0=\ip{[\rho(X),Y]}{Z}+\ip{[\rho(Y),Z]}{X}+\ip{[\rho(Z),X]}{Y}.
\]
Using $\rho(X)|_{\zz_0}=-\tfrac12\tau_X$ and the analogous formulas for $Y,Z$,
this becomes
\begin{equation}\label{eq:tausum}
  0=\ip{\tau_X Y}{Z}+\ip{\tau_Y Z}{X}+\ip{\tau_Z X}{Y}.
\end{equation}
By \eqref{eq:tauantisym} we have $\tau_X Y=-\tau_Y X$ and $\tau_Z X=-\tau_X Z$.
Using also $\tau_X,\tau_Y,\tau_Z\in\so(\zz_0)$, we compute
\[
  \ip{\tau_X Y}{Z}=-\ip{\tau_Y X}{Z}=\ip{X}{\tau_Y Z}=\ip{\tau_Y Z}{X}
\]
and
\[
  \ip{\tau_Z X}{Y}=-\ip{\tau_X Z}{Y}=\ip{Z}{\tau_X Y}=\ip{\tau_X Y}{Z}.
\]
Substituting into \eqref{eq:tausum} yields
$3\ip{\tau_X Y}{Z}=0$, hence $\ip{\tau_X Y}{Z}=0$ for all $Z\in\zz_0$, and
therefore $\tau_X Y=0$ for all $X,Y\in\zz_0$. Thus $\tau_X=0$ for every
$X\in\zz_0$, and consequently
\[
  [J_X,J_Y]=J_{\tau_X Y}=0
\]
for all $X,Y\in\zz_0$. Since $J$ vanishes on $\ker J$, we conclude that
$[J_Z,J_{Z'}]=0$ for all $Z,Z'\in\zz$, i.e.\ $J_{\zz}$ is Abelian.
\end{proof}

\begin{proof}[Proof of \cref{thm:main}]
The forward implication is \cref{lem:necc}; the reverse implication is
\cref{lem:suff}. It remains to justify the final assertion. If $\nn$ is at most
two-step nilpotent and $J_{\zz}$ is Abelian, then $J_{\zz}$ is in particular a
subalgebra of $\so(\fraka)$, and $\tau_Z=0\in\so(\zz_0)\cap\ann(L)$ for all
$Z\in\zz_0$. Hence \cref{thm:gordon} applies and $(N,\ip{\cdot}{\cdot})$ is
naturally reductive.
\end{proof}

\section{The isometry group}\label{sec:isom}

In this section we determine the full isometry group of a simply connected
cyclic Riemannian nilmanifold without Euclidean factor. Throughout, let
$(\widetilde N,\ip{\cdot}{\cdot})$ be a connected and simply connected cyclic
Riemannian nilmanifold, and assume that $\zz=[\nn,\nn]$; that is, $\nn$ has no
Euclidean factor, equivalently the $J$-map is injective. By \cref{thm:main},
$\nn$ is two-step nilpotent and $J_{\zz}\subset\so(\fraka)$ is an Abelian
subalgebra.

\subsection{The isotypic decomposition}

Since $J_{\zz}$ is an Abelian family of commuting skew-symmetric
endomorphisms of $\fraka$, the standard representation theory of Abelian algebras
gives a $J_{\zz}$-invariant orthogonal decomposition of $\fraka$ into isotypic
components
\begin{equation}\label{eq:isotypic}
  \fraka=\fraka_1^{r_1}\oplus\cdots\oplus \fraka_k^{r_k},
\end{equation}
where $\fraka_p^{r_p}=\fraka_{p1}\oplus\cdots\oplus\fraka_{p r_p}$ are the isotypic
components, $\fraka_{pi}\cong\fraka_{pj}$ for all $i,j$, and $\fraka_{pi}\perp\fraka_{qj}$
whenever $p\neq q$. Each irreducible component $\fraka_{pi}$ is two-dimensional and
of complex type. More precisely, for each $p$ there exist an orthonormal basis
$\{X_{pi},Y_{pi}\}$ of $\fraka_{pi}$ and a nonzero weight $\lambda_p\in\zz^{*}$ such
that, for every $Z\in\zz$,
\begin{equation}\label{eq:weight}
  J_Z X_{pi}=\lambda_p(Z)\,Y_{pi},\qquad
  J_Z Y_{pi}=-\lambda_p(Z)\,X_{pi}.
\end{equation}
The weight $\lambda_p$ is determined only up to sign, reflecting the choice of
orientation of the plane $\fraka_{pi}$. Using \eqref{eq:Jmap}, it is immediate
that
\[
  [X_{pi},Y_{pi}]=Z_p ,
\]
where $Z_p\in\zz$ is the vector dual to $\lambda_p$, i.e.\
$\ip{Z_p}{Z}=\lambda_p(Z)$ for all $Z\in\zz$.

\subsection{The isotropy subgroup}

The full isometry group of $(\widetilde N,\ip{\cdot}{\cdot})$ is
$I(\widetilde N,\ip{\cdot}{\cdot})=H\ltimes \widetilde N$, where
\[
  H=\Aut(\nn)\cap O(\nn,\ip{\cdot}{\cdot})
  =\bigl\{(T,\varphi)\in O(\fraka,\ip{\cdot}{\cdot})\times O(\zz,\ip{\cdot}{\cdot})
  : J_{\varphi(Z)}=T J_Z T^{-1} \text{ for all } Z\in\zz\bigr\}
\]
is the isotropy subgroup at the identity. Set
\[
  H_0:=\{(T,\varphi)\in H : \varphi=\mathrm{Id}\}
     =\{(T,\mathrm{Id})\in H : T J_Z=J_Z T \text{ for all } Z\in\zz\}.
\]
Then $H_0$ is a normal subgroup of $H$ (it is the kernel of the restriction
homomorphism $H\to O(\zz)$). Because every irreducible real representation
$\fraka_{pi}$ of the Abelian algebra $J_{\zz}$ is of complex type, one has
$\End_{J_{\zz}}(\fraka_{pi})=\C$; by Schur's lemma and theorem 3.12 of \cite{Lauret},
\begin{equation}\label{eq:H0}
  H_0\cong U(r_1)\times\cdots\times U(r_k),
\end{equation}
a direct product of unitary matrix groups. Here each $A=(a_{ij})\in U(r_p)$
acts on $\fraka_p^{r_p}$ by
\[
  A(X_1,X_2,\dots,X_{r_p})
  =\Bigl(\sum_{i=1}^{r_p}a_{1i}X_i,\dots,\sum_{i=1}^{r_p}a_{r_p i}X_i\Bigr),
  \qquad X_i\in\fraka_{pi}\cong\C ,
\]
where each plane $\fraka_{pi}$ is identified with $\C$ via the complex structure
$I_p$ given by $I_p X_{pi}=Y_{pi}$, $I_p Y_{pi}=-X_{pi}$; see
\cite[formula (12)]{Lauret}.

We now describe the quotient $H/H_0$, which consists of the possible
actions on the center $\zz$. Define
\[
  \Gamma:=\bigl\{\varphi\in O(\zz,\ip{\cdot}{\cdot}):
  \varphi(Z_p)=\pm Z_q \text{ whenever } r_p=r_q\bigr\}.
\]
Because $\varphi\in O(\zz)$ and $\{Z_1,\dots,Z_k\}$ generates the center space $\zz$, the condition
$\varphi(Z_p)=\pm Z_q$ forces $\|Z_p\|=\|Z_q\|$; in particular $\Gamma$ is
finite. For each $(T,\varphi)\in H$, the relation $J_{\varphi(Z)}=T J_Z T^{-1}$
implies that $T$ maps the weight space $\fraka_p$ (with weight $\lambda_p$) onto
$\fraka_q$ (with weight $\lambda_q=\pm\lambda_p\circ\varphi^{-1}$); hence
$\varphi(Z_p)=\pm Z_q$ with $r_p=r_q$, that is, $\varphi\in\Gamma$.

Conversely, every $\varphi\in\Gamma$ extends to an element $(T,\varphi)\in H$.
Indeed, if $\varphi(Z_p)=Z_q$, define $T$ on $\fraka_p$ by
\[
  T(X_{pi})=X_{qi},\qquad T(Y_{pi})=Y_{qi},
\]
while if $\varphi(Z_p)=-Z_q$, define
\[
  T(X_{pi})=X_{qi},\qquad T(Y_{pi})=-Y_{qi}.
\]
In the first case $T$ intertwines the complex structures ($T I_p=I_q T$), and in
the second case it reverses them ($T I_p=-I_q T$); a direct verification using
\eqref{eq:weight} gives $T J_Z T^{-1}=J_{\varphi(Z)}$ in both cases. The set of
all such $(T,\varphi)$ is a finite group, which we denote by $\widetilde\Gamma$.

We summarize the discussion as follows.

\begin{theorem}\label{thm:isom}
Let $(\widetilde N,\ip{\cdot}{\cdot})$ be a connected and simply connected
cyclic Riemannian nilmanifold without Euclidean factor. Then the isotropy
subgroup of its full isometry group is
\[
  H=\widetilde\Gamma\cdot H_0 ,
\]
where $H_0\cong U(r_1)\times\cdots\times U(r_k)$ is the compact connected
normal subgroup described in \eqref{eq:H0}, and $\widetilde\Gamma$ is a finite
group acting on $\zz$ by $\varphi(Z_p)=\pm Z_q$ ($r_p=r_q$) and on $\fraka$ by the
explicit formulas above. In particular,
\[
  \dim H=\sum_{p=1}^{k} r_p^{2}
\]
and the component group $H/H_0$ is a subgroup of the signed permutation group
of the weight lines $\{\pm Z_1,\dots,\pm Z_k\}$ preserving the multiplicities
$r_p$ and the norms $\|Z_p\|$.
\end{theorem}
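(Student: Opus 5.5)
The proof will assemble the preceding discussion into a short exact-sequence argument. Consider the restriction homomorphism $\pi:H\to O(\zz,\ip{\cdot}{\cdot})$, $(T,\varphi)\mapsto\varphi$. By definition $\ker\pi=H_0$, which is therefore normal in $H$. We then identify the image of $\pi$ and produce a section of it over that image.

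\textbf{Step 1: the image of $\pi$ equals $\Gamma$.} For the inclusion $\pi(H)\subset\Gamma$, take $(T,\varphi)\in H$. The relation $J_{\varphi(Z)}=TJ_ZT^{-1}$ shows that $T$ carries the joint eigenspaces of $J_\zz$ to joint eigenspaces. Concretely, $J_Z^2$ acts on $\fraka_p^{r_p}$ by $-\lambda_p(Z)^2$, and the map $Z\mapsto\lambda_p(Z)^2$, up to sign of $\lambda_p$, determines the isotypic component. Hence $T(\fraka_p^{r_p})=\fraka_q^{r_q}$ with $\lambda_q\circ\varphi=\pm\lambda_p$. Equal dimensions force $r_p=r_q$, and dualizing gives $\varphi(Z_p)=\pm Z_q$.

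For the reverse inclusion, the text above already constructs, for each $\varphi\in\Gamma$, an explicit $T$. We verify $TJ_ZT^{-1}=J_{\varphi(Z)}$ on the basis $X_{pi},Y_{pi}$ using \eqref{eq:weight} together with $\lambda_q(\varphi Z)=\pm\lambda_p(Z)$. This is a one-line check in each of the two sign cases. One subtlety needs care here. The map $p\mapsto q$ must be a bijection of the index set, and the $\lambda_p$ must be pairwise non-proportional up to sign so that the isotypic components are genuinely distinct. Both follow because $\varphi$ is orthogonal and permutes the set $\{\pm Z_p\}$.

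\textbf{Step 2: $\widetilde\Gamma$ is a finite subgroup and $H=\widetilde\Gamma\cdot H_0$.} The chosen lifts are canonical: they depend only on the signed permutation and are built on fixed bases. Therefore composition of lifts is again a lift, and $\widetilde\Gamma$ is a group mapping isomorphically onto $\Gamma$. The group $\Gamma$ is finite because $\{Z_p\}$ spans $\zz$, by injectivity of $J$, so $\varphi$ is determined by a signed permutation of finitely many vectors. Given $(T,\varphi)\in H$, let $(T',\varphi)\in\widetilde\Gamma$ be the lift of $\varphi$. Then $(T',\varphi)^{-1}(T,\varphi)\in\ker\pi=H_0$, which gives $H=\widetilde\Gamma\cdot H_0$ and $H/H_0\cong\Gamma$.

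\textbf{Step 3: structure of $H_0$ and the dimension count.} By \eqref{eq:H0}, $H_0\cong\prod_p U(r_p)$. This group is compact and connected, so $H_0$ is the identity component of $H$, and
\[
\dim H=\dim H_0=\sum_{p=1}^k r_p^2 .
\]
The description of $H/H_0$ as signed permutations of $\{\pm Z_p\}$ preserving $r_p$ and $\|Z_p\|$ is then immediate, since orthogonality of $\varphi$ preserves norms.

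The main obstacle is Step 1's first inclusion, namely justifying rigorously that $T$ permutes isotypic components. The approach will be to characterize $\fraka_p^{r_p}$ intrinsically as the common eigenspace of the commuting symmetric operators $J_Z^2$, $Z\in\zz$, with eigenvalue function $-\lambda_p(Z)^2$. Conjugation by $T$ transforms these operators by $Z\mapsto\varphi(Z)$.
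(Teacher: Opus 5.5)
Your proposal is correct and follows the same route as the paper: the restriction homomorphism $H\to O(\zz)$ has kernel $H_0$, its image is identified with $\Gamma$ by showing that $T$ permutes the isotypic components, and the explicit lifts form the finite section $\widetilde\Gamma$. Your $J_Z^2$ joint-eigenspace argument and your check that lifts compose supply details the paper only asserts. One small slip: the weights need not be pairwise non-proportional (e.g.\ $[X_1,Y_1]=Z$, $[X_2,Y_2]=2Z$ gives $\lambda_2=2\lambda_1$), and what you actually use, $\lambda_p\neq\pm\lambda_q$ for $p\neq q$, holds by construction of the isotypic decomposition rather than as a consequence of properties of $\varphi$.
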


\begin{remark}\label{rem:isom}
For the three-dimensional Heisenberg group, $k=r_1=1$, so
$H_0\cong U(1)\cong S^{1}$ and $\widetilde\Gamma\cong\Z_2$ (generated by
$Z\mapsto-Z$, acting on $\fraka$ by $X\mapsto X$, $Y\mapsto-Y$). Thus
$H\cong O(2)$, the isotropy group of the round metric on $\R^2$.
 More generally, when all weights have equal norm and all
multiplicities are $1$, $\widetilde\Gamma$ is the full signed permutation group
$(\Z_2)^{k}\rtimes S_k$.
\end{remark}

\section{Examples}\label{sec:examples}

In this section we illustrate \cref{thm:main} and \cref{thm:isom} with several
concrete examples, and we contrast the cyclic condition with the
Heisenberg-type condition.

\begin{example}[The three-dimensional Heisenberg group]
\label{ex:heisenberg}
Let $\nn=\Span\{X,Y,Z\}$ with the single nontrivial bracket $[X,Y]=Z$, and
endow $\nn$ with the inner product for which $\{X,Y,Z\}$ is orthonormal. Let
$N=H_3$ be the corresponding simply connected Lie group. Then $\zz=\R Z$ is the
center, $\fraka=\Span\{X,Y\}$, and \eqref{eq:Jmap} gives
\[
  J_Z X=Y,\qquad J_Z Y=-X.
\]
Thus $J_{\zz}=\R\cdot J_Z$ is one-dimensional, hence Abelian. By
\cref{thm:main}, $(H_3,\ip{\cdot}{\cdot})$ is cyclic and naturally reductive.

Let us make the cyclic decomposition explicit. In the notation of
\cref{lem:suff}, $\hh_0=\R\cdot(J_Z,0)$ and
\[
  \mm=\Span\Bigl\{X,\ Y,\ Z-\tfrac12 J_Z\Bigr\}.
\]
The full isometry group has isotropy $H\cong O(2)$, in agreement with
\cref{thm:isom} and \cref{rem:isom}.
\end{example}

\begin{example}[Products and diagonal $J$-maps]\label{ex:diagonal}
More generally, let $\fraka=\R^{2k}$ with orthonormal basis
$\{X_1,Y_1,\dots,X_k,Y_k\}$, let $\zz=\R^{k}$ with orthonormal basis
$\{Z_1,\dots,Z_k\}$, and define a two-step nilpotent Lie algebra
$\nn=\fraka\oplus\zz$ by
\[
  [X_p,Y_p]=\mu_p Z_p,\qquad \mu_p>0,
\]
all other brackets being zero. Then $J_{Z_p}$ acts on the plane
$\Span\{X_p,Y_p\}$ by $J_{Z_p}X_p=\mu_p Y_p$, $J_{Z_p}Y_p=-\mu_p X_p$, and acts
trivially on the other planes. Hence $J_{Z_p}J_{Z_q}=J_{Z_q}J_{Z_p}$ for all
$p,q$ and $J_{\zz}$ is Abelian. By \cref{thm:main}, the associated simply
connected nilpotent Lie group is cyclic and naturally reductive for every choice
of the positive constants $\mu_1,\dots,\mu_k$. This family is exactly the
Riemannian product (with rescaled factors) of $k$ copies of the
three-dimensional Heisenberg group.
\end{example}

\begin{example}[Low dimensions]\label{ex:lowdim}
The simply connected nilpotent Lie groups of dimension at most four are the
Abelian groups $\R^{d}$, the three-dimensional Heisenberg group $H_3$, the direct
product $H_3\times\R$, and the four-dimensional filiform (three-step nilpotent)
group. Abelian groups and $H_3$ (and its product with $\R$) are cyclic by
\cref{ex:heisenberg}. By \cref{prop:twostep}, the
four-dimensional filiform group, being three-step nilpotent, carries no cyclic
left-invariant metric. This recovers, for nilmanifolds, part of the
classification of simply connected cyclic homogeneous Riemannian manifolds of dimension at
most four obtained in \cite{Gadea2016}.
\end{example}

\begin{example}[Heisenberg-type groups are not cyclic]\label{ex:htype}
A Heisenberg-type (or $H$-type) Lie algebra is a two-step nilpotent
$\nn=\fraka\oplus\zz$ with inner product satisfying
\[
  J_Z^{2}=-\|Z\|^{2}\,\mathrm{Id}_{\fraka}, \qquad Z\in\zz,
\]
equivalently $J_Z J_{Z'}+J_{Z'}J_Z=-2\ip{Z}{Z'}\mathrm{Id}_{\fraka}$; see
\cite{Kaplan}. If $\dim\zz\ge 2$ and $Z,Z'\in\zz$ are nonzero and orthogonal,
then $J_Z J_{Z'}=-J_{Z'}J_Z$, so that $J_Z$ and $J_{Z'}$ anticommute; in
particular they do not commute, and $J_{\zz}$ is not Abelian. By
\cref{thm:main}, such a group is not cyclic. Thus the Heisenberg-type condition
and the cyclic condition are, in a precise sense, opposite extremes: the former
forces the $J$-maps to satisfy the Clifford (anti)commutation relations, while
the latter forces them to commute. The three-dimensional Heisenberg group, where
$\dim\zz=1$, is the unique overlap of the two families.
Indeed,  a Heisenberg-type group is naturally reductive if and only if $\dim\zz=1,3$ \cite{Kaplan}.
\end{example}


\textbf{Declarations}

\textbf{AI Disclosure.}\,  Generative artificial intelligence tools, specifically DeepSeek,  were used during the preparation of this manuscript as auxiliary aids for language editing, improving exposition and organization. All mathematical statements, proofs, computations, references, and conclusions were independently verified by the authors, who take full responsibility for the content of the manuscript.


\textbf{Conflicts of interest.}\ The authors declare that they have no conflict of interest.


\end{document}